\renewcommand{\baselinestretch}{\baselinestretch}
\renewcommand{\baselinestretch}{1.1}
\numberwithin{equation}{section}
\newtheorem{thm}{Theorem}[section]
\newtheorem{lem}[thm]{Lemma}
\newtheorem{prop}[thm]{Proposition}
\theoremstyle{definition}
\theoremstyle{remark}
\newtheorem{rmk}[thm]{Remark}
\newtheorem{exam}[thm]{Example}
\numberwithin{equation}{section}
\newcommand{\ra}{{\ \longrightarrow \ }}
\newcommand{\nra}{{\ \longarrownot\longrightarrow \ }}
\newcommand{\gen}{\text{gen}}
\newcommand{\ord}{\text{ord}}
\newcommand{\rk}{\text{rank}}
\newcommand{\n}{{\mathbb N}}
\newcommand{\z}{{\mathbb Z}}
\newcommand{\q}{{\mathbb Q}}
\newcommand{\Mod}[1]{\ (\mathrm{mod}\ #1 )}
\begin{document}
\title{A classification of regular diagonal quadratic forms}

\author{Mingyu Kim}
\address{Department of Mathematics, Sungkyunkwan University, Suwon 16419, Korea}
\email{kmg2562@skku.edu}
\thanks{This work was supported by the National Research Foundation of Korea(NRF) grant funded by the Korea government(MSIT) (NRF-2021R1C1C2010133).}

\subjclass[2020]{Primary 11E12, 11E20} \keywords{regular quadratic forms}
\begin{abstract}
A positive-definite integral quadratic form is called regular if it represents every positive integer which is locally represented.
In this article, we classify all regular diagonal quadratic forms of rank greater than 3.
\end{abstract}
\maketitle

\section{Introduction}
For a positive integer $n$ and a positive-definite (classically) integral $k$-ary quadratic form
$$
f=f(x_1,x_2,\dots,x_k)=\sum_{1\le i,j\le k}a_{ij}x_ix_j\ \ (a_{ij}=a_{ji}\in \z),
$$
we consider the Diophantine equation
\begin{equation} \label{eq1}
f(x_1,x_2,\dots,x_k)=n
\end{equation}
and the congruence equation
\begin{equation} \label{eq2}
f(x_1,x_2,\dots,x_k)\equiv n\Mod N.
\end{equation}
Obviously, \eqref{eq2} is solvable over $\z$ if \eqref{eq1} is.
Note that the converse holds when $k=1$, i.e.,
for any positive integer $a$, if the congruence equation
$$
ax^2\equiv n\Mod N
$$
has an integer solution for every positive integer $N$, then $n=ax_0^2$ for some $x_0\in \z$.
However, the converse is no longer true when $k$ is greater than 1, and one may take $f(x,y)=x^2+11y^2$ and $n=3$ as a counterexample.
A positive-definite integral quadratic form for which the converse holds is called {\it regular}, following Dickson \cite{D}.
A problem of determining all regular quadratic forms has been studied in cases of $k=2,3,4$.
Under the assumption of generalized Riemann hypothesis, all regular binary quadratic forms were enumerated \cite{CHPT} (see also \cite{Ka} and \cite{V}).
The ternary case has been of particular interest.
Jones \cite{J} showed that there are exactly 102 regular diagonal ternary quadratic forms (see also \cite{JP}).
The finiteness of regular ternary quadratic forms was established by Watson \cite{W}.
The 913 candidates for regular ternary quadratic forms were given by Jagy, Kaplansky and Schiemann \cite{JKS}.
All but 22 candidates were already shown to be regular at that time.
Oh \cite{O1} proved the regularities of 8 forms.
The regularities of the remaining 14 forms were proved under the assumption of generalized Riemann hypothesis by Lemke Oliver \cite{L}.
Earnest \cite{E2} showed that there are infinitely many regular quaternary quadratic forms.
A classification of regular diagonal quaternary quadratic forms was carried out by B.M. Kim \cite{BMK} but the paper was never published till now.

Throughout, we always assume that every quadratic form is positive-definite and classically integral.
A quadratic form $f$ is called universal if it represents all positive integers.
Note that, by definition, any universal quadratic form is regular.
Ramanujan \cite{R} found all universal diagonal quaternary quadratic forms.
Later, Bhargava \cite{B} showed that any universal quadratic form contains a universal subform of rank less than or equal to 5.

We call a regular quadratic form $f$ {\it new} if no proper subform of $f$ represents all integers represented by $f$.
For example, a universal quaternary quadratic form $f=x^2+2y^2+3z^2+5w^2$ is new regular since no proper subform of $f$ is universal (see \cite{B}).
Recently, in \cite{KO}, Oh and the author proved that there is an absolute constant $C$ such that the rank of any new regular quadratic form is less than $C$.

We denote the set of all nonnegative integers represented by a quadratic form $g$ by $Q(g)$.
Let $f=f(x_1,x_2,\dots,x_k)$ be a diagonal $k$-ary quadratic form.
For $i=1,2,\dots,k$, we define a $(k-1)$-ary subform $\widehat{f}_i$ of $f$ by removing the $i$-th summand $a_ix_i^2$ from $f$ so that
$$
\widehat{f}_i(x_1,\dots,x_{i-1},x_{i+1},\dots,x_k)=a_1x_1^2+a_2x_2^2+\cdots+a_{i-1}x_{i-1}^2+a_{i+1}x_{i+1}^2+\cdots+a_kx_k^2.
$$
We call a regular diagonal $k$-ary quadratic form $f$ {\it minimal} if $Q(\widehat{f}_i)\subsetneq Q(f)$ for all $i=1,2,\dots,k$.
Note that any quaternary regular diagonal form is minimal.

By definition, every new regular diagonal quadratic form is minimal.
However, the converse in not true in general, e.g., a universal diagonal quaternary form $g=x^2+y^2+z^2+w^2$ is minimal while it is not new since $x^2+y^2+z^2+4w^2$ is a proper subform of $g$ which is also universal.

In the present paper, we find all minimal regular diagonal quadratic forms of rank greater than 3, and describe a way to get all regular diagonal quadratic forms from the minimal ones.
This gives a classification of all regular diagonal quadratic forms of rank greater than 3.
In the midst of the classification, we redid the enumeration of regular diagonal quaternary quadratic forms.
We found some minor typos and errors in the list of regular diagonal quaternary quadratic forms in \cite{BMK}.
For example, for any $d\in \{9,15\} \cup \{ 2^{2r}3 : r\in \n\}$, the quaternary quadratic form $2x^2+3y^2+6z^2+dw^2$ is regular, and thus they should be included.
On the other hand, some forms including $h(r)=2x^2+3y^2+9z^2+2^{r+1}3^2w^2$ ($r\in \n$) are not regular and thus should be excluded.
Note that $13\cdot 2^{s}$ is represented locally but not globally by $h(r)$, where $s$ is the greatest odd integer less than or equal to $r$.

In the following, we adopt the geometric language of quadratic spaces and lattices, generally following \cite{Ki} and \cite{OM}.
Let $R$ be the ring of rational integers $\z$ or the ring of $p$-adic integers $\z_p$ for a prime $p$.
An $R$-lattice $L=R\mathbf{x}_1+R\mathbf{x}_2+\cdots+R\mathbf{x}_k$ is a finitely generated free $R$-module equipped with a non-degenerate symmetric bilinear map $B : L\times L \to R$.
The corresponding Gram matrix $M_L$ with respect to the basis $\{\mathbf x_i\}_{i=1}^k$ is defined to be $(B(\mathbf{x}_i,\mathbf{x}_j))_{1\le i,j\le k}$, and the corresponding quadratic form $f_L$ is defined by
$$
f_L(x_1,x_2,\dots,x_k)=\sum_{1\le i,j\le k}B(\mathbf{x}_i,\mathbf{x}_j)x_ix_j.
$$
The discriminant of $L$, denoted $dL$, is the determinant of a Gram matrix of $L$.
The quadratic map $Q : L\to R$ associated to the symmetric bilinear map $B$ is given by $Q(\mathbf{x})=B(\mathbf{x},\mathbf{x})$ for $\mathbf{x}\in L$.
The set of all elements in $R$ represented by $L$ is denoted by $Q(L)$.
The ideal of $R$ generated by the set $\{B(\mathbf{x},\mathbf{y}) : \mathbf{x},\mathbf{y}\in L\}$ ($\{Q(\mathbf{x}) : \mathbf{x}\in L\}$) is called {\it the scale of $L$} ({\it the norm of $L$}) and denoted by $\mathfrak{s}L$ ($\mathfrak{n}L$, respectively).

Throughout, we always assume that every $\z$-lattice $L$ is positive-definite in the sense that a Gram matrix $M_L$ of $L$ is positive-definite, and is primitive in the sense that $\mathfrak{s}L=\z$.
If a $\z$-lattice $\z \mathbf{x}_1+\z \mathbf{x}_2+\cdots+\z \mathbf{x}_k$ is diagonal, i.e., $B(\mathbf{x}_i,\mathbf{x}_j)=0$ whenever $i\neq j$, then this $\z$-lattice will be simply denoted by $\langle Q(\mathbf{x}_1),Q(\mathbf{x}_2),\dots,Q(\mathbf{x}_k)\rangle$.
For a set $S$ of nonnegative integers and a $\z$-lattice $K$, we define $\psi(S,K)$ to be the smallest integer in $S$ which is not represented by $K$.
When $K$ represents all of $S$, then we define $\psi(S,K)=\infty$.
Clearly, a $\z$-lattice $L$ is regular if and only if $\psi(\n \cup \{0\},L)=\infty$.

Let $L$ be a $\z$-lattice and $p$ be a prime.
A $\z_p$-lattice $L_p$ is defined by $L_p=L\otimes \z_p$.
We let
$$
\z_p^{\times}=\z_p-p\z_p \ \ \text{and}\ \ (\z_p^{\times})^2=\{ \gamma^2 : \gamma \in \z_p^{\times}\}.
$$
If a nonnegative integer $n$ is represented by $L_q$ for all primes $q$, then we say that {\it $n$ is locally represented by $L$}.
It is well known that if $n$ is locally represented by $L$, then $n$ is represented by a $\z$-lattice $K$ in the genus of $L$ (see \cite[102:5]{OM}).
The genus of $L$ will be denoted by $\gen(L)$, and we denote by $Q(\gen(L))$ the set of all nonnegative integers which are locally represented by $L$.
Note that $Q(\gen(L))$ can be determined easily by using the results in \cite{OM2}.
For each odd prime $q$, we denote by $\Delta_q$ a non-square unit in $\z_q$, and $\left(\frac{\cdot}{q}\right)$ will denote the Legendre symbol.
Any unexplained notation and terminologies can be found in \cite{Ki} or \cite{OM}.

\section{Minimalities and stabilities of regular diagonal $\z$-lattices}

For $k=1,2,3,\dots$, we define a set
$$
\mathcal{N}(k)=\{  (a_1,a_2,\dots,a_k)\in \n^k : a_1\le a_2\le \cdots \le a_k\},
$$
and put $\mathcal{N}=\bigcup_{k=1}^{\infty}\mathcal{N}(k)$.
To each vector $\mathbf{a}=(a_1,a_2,\dots,a_k)\in \mathcal{N}$, a diagonal $\z$-lattice $L(\mathbf{a})=\langle a_1,a_2,\dots,a_k\rangle$ corresponds.
We define a partial order on $\mathcal{N}$ as follow.
For two elements $\mathbf{a}=(a_1,a_2,\dots,a_r)$  and $\mathbf{b}=(b_1,b_2,\dots,b_s)$ in $\mathcal{N}$, $\mathbf{a}\preceq \mathbf{b}$ if and only if $(a_i)_{1\le i\le r}$ is a subsequence of $(b_j)_{1\le j\le s}$ such that $Q(L(\mathbf{a}))=Q(L(\mathbf{b}))$.
We write $\mathbf{a}\prec \mathbf{b}$ when $\mathbf{a}\preceq \mathbf{b}$ and $\mathbf{a}\neq \mathbf{b}$.
A regular diagonal $\z$-lattice $\langle a_1,a_2,\dots,a_k\rangle$ $(a_1\le a_2\le \cdots \le a_k)$ is called {\it minimal} if its coefficients vector $(a_1,a_2,\dots,a_k)$ is a minimal element of the poset $(\mathcal{N},\preceq)$.

Let $K$ be a $\z$-lattice and $n$ be a positive integer.
We put $J=K\perp \langle n\rangle$.
If $Q(K)=Q(J)$, then we say that {\it $n$ is redundant in $J$ (or redundant to $K$)}.
We also say that {\it $J$ is obtained from $K$ by a redundant insertion of $n$}.
Then one may see from definition that any regular diagonal $\z$-lattice can be obtained from a minimal regular diagonal $\z$-lattice by (finitely many) redundant insertions.
The next two lemmas together give a criterion for redundancy of a positive integer to a given regular $\z$-lattice.


\begin{lem} \label{lemredodd}
Let $p$ be an odd prime and $L$ be a $\z_p$-lattice.
For $\gamma \in \z_p$,
$$
Q(L)=Q(L\perp \langle \gamma \rangle)\ \ \text{if and only if}\ \ \gamma \z_p \subset Q(L).
$$
\end{lem}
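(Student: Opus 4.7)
The plan is to recast the condition $Q(L)=Q(L\perp\langle\gamma\rangle)$ as a closure statement about $Q(L)$. The starting point is the identity
\[
Q(L\perp\langle\gamma\rangle) \;=\; \{Q(\mathbf{x})+\gamma c^2 : \mathbf{x}\in L,\,c\in\z_p\} \;=\; Q(L)+\gamma(\z_p)^2,
\]
so the equality in the lemma is equivalent to the assertion that $n+\gamma c^2\in Q(L)$ for every $n\in Q(L)$ and $c\in\z_p$.

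For the forward direction, I would apply this closure property iteratively, starting from $n=0\in Q(L)$. One step gives $\gamma c^2\in Q(L)$ for all $c$, and $k$ successive steps yield $\gamma(c_1^2+\cdots+c_k^2)\in Q(L)$ for arbitrary $c_i\in\z_p$. The crucial input is that every element of $\z_p$ is a sum of four squares when $p$ is odd: a short counting argument in $\mathbb{F}_p$ combined with Hensel's lemma produces $a,b\in\z_p$ with $a^2+b^2=-1$, exhibiting a primitive isotropic vector $(1,a,b,0)$ for $\langle 1,1,1,1\rangle$, so this unimodular $\z_p$-lattice splits a hyperbolic plane and represents all of $\z_p$. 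Taking $k=4$ then forces $\gamma\z_p\subseteq Q(L)$.

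For the backward direction, suppose $\gamma\z_p\subseteq Q(L)$ and fix $n\in Q(L)$, $c\in\z_p$; set $e=v_p(\gamma)$. When $v_p(n)<e$ (which in particular forces $n\neq 0$), the ratio $\gamma c^2/n$ lies in $p\z_p$, so the Local Square Theorem gives $1+\gamma c^2/n\in(\z_p^\times)^2$, and hence
\[
n+\gamma c^2 \;=\; n\bigl(1+\gamma c^2/n\bigr) \;\in\; n\cdot(\z_p^\times)^2 \;\subseteq\; Q(L),
\]
the last inclusion by the trivial observation that $Q(L)$ is closed under multiplication by $(\z_p^\times)^2$ (if $n=Q(\mathbf{x})$ then $\lambda^2 n = Q(\lambda\mathbf{x})$). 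When $v_p(n)\ge e$, both $n$ and $\gamma c^2$ lie in the additively closed $\z_p$-submodule $p^e\z_p=\gamma\z_p\subseteq Q(L)$, so $n+\gamma c^2$ does as well.

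The main obstacle is really the forward direction, which rests on the identity $(\z_p)^2+(\z_p)^2+(\z_p)^2+(\z_p)^2=\z_p$ for $p$ odd; once this is available, the iteration is mechanical. The backward direction then collapses to a clean two-case argument anchored by the Local Square Theorem. Both of these ingredients fail at $p=2$ (where $\langle 1,1,1,1\rangle$ is anisotropic over $\q_2$ and $1+p\z_p$ is not contained in $(\z_p^\times)^2$), which is precisely why the lemma requires $p$ to be odd.
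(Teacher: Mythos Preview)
Your proof is correct and follows essentially the same approach as the paper's: the backward direction is identical (the two-case split via $\ord_p(n)$ vs.\ $\ord_p(\gamma)$ and the Local Square Theorem), and the forward direction likewise iterates the closure $Q(L)+\gamma(\z_p)^2\subseteq Q(L)$. The only cosmetic difference is how the last step is cashed out---the paper observes that $n\gamma\in Q(L)$ for every positive integer $n$ and then picks integers $1,a,p,pa$ (with $a$ a nonresidue mod $p$) to hit all nonzero square classes, whereas you invoke the universality of $\langle 1,1,1,1\rangle$ over $\z_p$; both are standard and equally short.
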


\begin{proof}
We prove the ``if" part first.
Assume that $\gamma \z_p\subset Q(L)$.
Note that
$$
Q(L\perp \langle \gamma \rangle)=\left\{Q(\mathbf{x})+\alpha^2\gamma : \mathbf{x}\in L,\ \alpha \in \z_p \right\}.
$$
Let $\mathbf{x}$ be a vector in $L$ and $\alpha$ be an element in $\z_p$.
If $\ord_p(Q(\mathbf{x}))<\ord_p(\gamma)$, then by Local Square Theorem \cite[63:1]{OM}, there is an $\epsilon \in \z_p^{\times}$ such that
$$
Q(\mathbf{x})+\alpha^2\gamma=\epsilon^2Q(\mathbf{x})=Q(\epsilon \mathbf{x})\in Q(L).
$$
If $\ord_p(Q(\mathbf{x}))\ge \ord_p(\gamma)$, then we have
$$
Q(\mathbf{x})+\alpha^2\gamma \in \gamma \z_p \subset Q(L).
$$
Thus we have
$$
Q(L)\subseteq Q(L\perp \langle \gamma \rangle)=\{ Q(\mathbf{x})+\alpha^2\gamma : \mathbf{x}\in L,\ \alpha \in \z_p\}  \subseteq Q(L),
$$
which proves the ``if" part.

To show the ``only if" part, we assume that $Q(L)=Q(L\perp \langle \gamma \rangle)$.
For each $k=1,2,3,\dots$, we define a $\z$-lattice $K(n)$ by
$$
K(n)=L\perp \langle \gamma,\gamma,\dots,\gamma \rangle,
$$
where $\gamma$ is repeated $n$ times so that $\text{rank}(K(n))=\text{rank}(L)+n$.
Note that, for any $n\in \n$,
$$
Q(K(n+1))=Q(K(n)\perp \langle \gamma \rangle)=Q(K(n))+Q(\langle \gamma \rangle).
$$
Now one may use induction on $n$ to see that $Q(K(n))=Q(L)$.
Consequently, we have
$$
n\gamma \in Q(K(n))=Q(L)
$$
for every positive integer $n$.
In particular, we have $\{ \gamma, a\gamma, p\gamma, pa\gamma \} \subset Q(L)$, where $a$ is any positive integer which is a quadratic nonresidue modulo $p$.
From this follows that $\gamma \z_p\subset Q(L)$.
This completes the proof.
\end{proof}


\begin{lem} \label{lemred2}
Let $L$ be a $\z_2$-lattice.
For $\gamma \in \z_2$, $Q(L)=Q(L\perp \langle \gamma \rangle)$ if and only if all of the followings hold;
\begin{enumerate} [(i)]
\item $\{ s\in \{1,3,5,7\} : 2^{\ord_2(\gamma)-2}s\in Q(L)\}=\emptyset$, $\{1,5\}$, $\{3,7\}$ or $\{1,3,5,7\}$;
\item $\{ s\in \{1,3,5,7\} : 2^{\ord_2(\gamma)-1}s\in Q(L)\}=\emptyset$ or $\{1,3,5,7\}$;
\item $\gamma \z_2\subset Q(L)$.
\end{enumerate}
\end{lem}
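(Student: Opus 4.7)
The plan is to mimic the structure of Lemma~\ref{lemredodd}, splitting into ``if'' and ``only if'' directions, while accounting for the 2-adic peculiarity that Local Square Theorem \cite[63:1]{OM} at $p=2$ only identifies elements whose ratio lies in $1+8\z_2$. Throughout, write $r=\ord_2(\gamma)$ and $\gamma=2^r\epsilon$ with $\epsilon\in\z_2^\times$.

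For the ``if'' direction, I would fix $\mathbf{x}\in L$ and $\alpha\in\z_2$ and verify $Q(\mathbf{x})+\alpha^2\gamma\in Q(L)$ via a case analysis on $v=\ord_2(Q(\mathbf{x}))$ versus $w=\ord_2(\alpha^2\gamma)\ge r$. When $w\le v$, the sum has valuation at least $r$ and lies in $\gamma\z_2\subset Q(L)$ by (iii). When $w>v$ with $w-v\ge 3$, the sum remains in the same square class as $Q(\mathbf{x})$, and scaling $\mathbf{x}$ by a suitable unit finishes the case. The only residual possibilities have $w-v\in\{1,2\}$ together with $w\ge r$: either $v\ge r$ and (iii) again applies, or $(v,w)\in\{(r-1,r),(r-2,r)\}$, which forces $\alpha\in\z_2^\times$. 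In these last subcases the sum lies in the square class of $2^{r-1}(s+2\epsilon\bmod 8)$ or $2^{r-2}(s+4\bmod 8)$ respectively (using $\alpha^2\equiv 1\pmod 8$), and conditions (ii) and (i) each guarantee a representative of this class in $Q(L)$; a final unit scaling via Local Square Theorem closes the case.

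For the ``only if'' direction, condition (iii) is obtained exactly as in Lemma~\ref{lemredodd}: the ladder $K(n)=L\perp\langle\gamma,\dots,\gamma\rangle$ (with $n$ copies of $\gamma$) still satisfies $Q(K(n))=Q(L)$ by induction, so $n\gamma\in Q(L)$ for every positive integer $n$. Writing $n=2^t m$ with $t\ge 0$ and $m\in\{1,3,5,7\}$ produces representatives of all four square classes at every valuation $v\ge r$, and Local Square Theorem promotes this to $\gamma\z_2\subset Q(L)$. For (i) and (ii), I would start from $2^{r-2}s\in Q(L)$ (resp.\ $2^{r-1}s\in Q(L)$) and add $\gamma$: the unit part of the resulting element is $s+4\epsilon$ (resp.\ $s+2\epsilon$), reducing modulo $8$ to $s+4$ (resp.\ $s+2\epsilon\bmod 8$). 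After unit scaling, the respective sets are closed under the involution $s\leftrightarrow s+4$ on $\{1,3,5,7\}$ (with orbits $\{1,5\}$ and $\{3,7\}$) and under the $4$-cycle $s\mapsto s+2\epsilon\bmod 8$ (regardless of $\epsilon\bmod 4$). The subsets closed under these actions are exactly those listed in (i) and (ii).

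The main obstacle I anticipate is the square-class bookkeeping in the subcases $(v,w)=(r-1,r)$ and $(r-2,r)$ of the ``if'' direction: one must track the sum modulo $2^{w+3}$ rather than merely modulo $2^{w+1}$ in order to legitimately apply Local Square Theorem, and verify that the resulting square class is uniform in $\alpha\in\z_2^\times$ and matches the invariance condition built into (i) or (ii). The identity $\alpha^2\equiv 1\pmod 8$ for units is what collapses the dependence on $\alpha$ and makes everything align cleanly with the statements of (i) and (ii).
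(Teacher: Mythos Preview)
Your proposal is correct. The paper gives no proof at all for this lemma (it reads ``The proof is quite straightforward and left as an exercise to the reader''), and your argument---mirroring the structure of Lemma~\ref{lemredodd} with the $2$-adic bookkeeping handled via $\alpha^2\equiv 1\pmod 8$ and the Local Square Theorem---is exactly the intended routine verification; the case split in the ``if'' direction and the ladder/iteration argument in the ``only if'' direction are both sound.
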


\begin{proof}
The proof is quite straightforward and left as an exercise to the reader.
\end{proof}


\begin{exam} \label{ex148144}
For $r=1,2,\dots$, let $L(r)=\langle 1,48,144,2^4 3^{2r}\rangle$.
Note that $L(r)$ is a regular diagonal quaternary $\z$-lattice for any positive integer $r$.
For a prime $p$, one may easily check that
$$
Q(L_p)=\begin{cases} (\z_2^{\times})^2 \cup 4(\z_2^{\times})^2 \cup 20(\z_2^{\times})^2 \cup 16\z_2&\text{if}\ \ p=2,\\
\z_3-\left( 2(\z_3^{\times})^2 \cup \bigcup_{i=1}^{r}2\cdot 3^{2i-1}(\z_3^{\times})^2\right)&\text{if}\ \ p=3,\\
\z_p&\text{if}\ \ p\ge 5.\end{cases}
$$
By Lemmas \ref{lemredodd} and \ref{lemred2}, one may see that a positive integer $n$ is redundant to $L(r)$ if and only if $n$ is divisible by $2^43^{2r}$.
Hence for any vector $(b_5,b_6,\dots,b_k)\in \mathcal{N}(k-4)$, the $k$-ary $\z$-lattice
$$
\langle 1,48,144,2^4 3^{2r},2^4 3^{2r}b_5,2^4 3^{2r}b_6,\dots,2^4 3^{2r}b_k\rangle
$$
is a regular diagonal $\z$-lattice obtained from $L(r)$ by redundant insertions.
\end{exam}


We turn to the stability of regular diagonal $\z$-lattices.
For a positive integer $m$ and a $\z$-lattice $L$, let
$$
\Lambda_m(L)=\{ \mathbf{x}\in L : Q(\mathbf{x}+\mathbf{z})\equiv Q(\mathbf{z}) \Mod m \ \text{for all}\ \mathbf{z}\in L \}.
$$
Then $\Lambda_m(L)$ is a sublattice of $L$ and called {\it Watson transformation of $L$ modulo $m$}.
The primitive $\z$-lattice obtained from $\Lambda_m(L)$ by scaling the quadratic space $L\otimes \q$ by a suitable rational number will be denoted by $\lambda_m(L)$.
For more information and properties on this transformation, we refer the reader to \cite{CE},\cite{CO},\cite{W} or \cite{W2}.


\begin{lem} \label{lemlambda}
Let $p$ be a prime and let $L=\langle a_1,p^{e_2}a_2,p^{e_3}a_3,\dots,p^{e_k}a_k\rangle$ be a regular diagonal $\z$-lattice of rank $k\ge 4$ with $0\le e_2\le e_3\le \cdots \le e_k$ such that $(p,a_1a_2\cdots a_k)=1$.
Then the followings hold:
\begin{enumerate} [(i)]
\item If $p=2$ and $e_2\ge 1$, then $\lambda_2(L)$ is also a regular diagonal $\z$-lattice.
\item If $p=2$, $e_2=0$, $e_3\ge 2$ and $a_1\equiv a_2\Mod 4$, then $\lambda_4(L)$ is regular diagonal.
\item If $p=2$, $e_2=e_3=0$, $e_4\ge 2$ and $a_1\equiv a_2\equiv a_3\Mod 4$, then $\lambda_4(L)$ is regular diagonal.
\item If $p\neq 2$ and $e_2\ge 1$, then $\lambda_p(L)$ is regular diagonal.
\item If $p\neq 2$, $e_2=0$, $e_3\ge 1$ and $\left( \dfrac{-a_1a_2}{p}\right)=-1$, then $\lambda_p(L)$ is regular diagonal.
\end{enumerate}
\end{lem}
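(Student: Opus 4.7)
The plan is to handle all five cases by the same template, resting on two key facts: $\lambda_m(L)$ is the primitive rescaling of $\Lambda_m(L)$ by a suitable power of $1/m$, so that
\[
n \in Q(\lambda_m(L)) \iff mn \in Q(\Lambda_m(L)),
\]
and an analogous genus-level identity holds at every localization.

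First I would identify $\Lambda_m(L)$ explicitly in each case. The defining condition $Q(\mathbf{x}+\mathbf{z}) \equiv Q(\mathbf{z}) \Mod m$ for all $\mathbf{z} \in L$ is equivalent (by testing $\mathbf{z}=0$ and $\mathbf{z}=\mathbf{e}_i$) to $Q(\mathbf{x}) \equiv 0 \Mod m$ together with $2B(\mathbf{x}, \mathbf{e}_i) \equiv 0 \Mod m$ for each $i$. Working these out, the coordinates $x_i$ attached to unit diagonal entries are forced to be divisible by $p$ (respectively by $2$), while those attached to $p$-divisible entries are unconstrained. This exhibits $\Lambda_m(L)$ as an orthogonal sublattice of $L$ on the basis $\{p\mathbf{e}_i\}\cup\{\mathbf{e}_j\}$ (or the analogue with $2$), and rescaling by the appropriate power of $m$ yields $\lambda_m(L)$ as a primitive diagonal $\z$-lattice.

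Second I would show that for any $\mathbf{v} \in L$ with $m \mid Q(\mathbf{v})$, in fact $\mathbf{v} \in \Lambda_m(L)$; this is where the algebraic hypotheses enter. In case (iv), reducing $Q(\mathbf{v}) \equiv 0 \Mod p$ gives $a_1 v_1^2 \equiv 0 \Mod p$ (the remaining terms already carry a factor of $p$), forcing $p \mid v_1$. In case (v), the congruence $a_1 v_1^2 + a_2 v_2^2 \equiv 0 \Mod p$ together with $\left(\frac{-a_1 a_2}{p}\right) = -1$ forces $p \mid v_1$ and $p \mid v_2$. In case (ii), $4 \mid Q(\mathbf{v})$ together with $e_3 \geq 2$ reduces to $a_1 v_1^2 + a_2 v_2^2 \equiv 0 \Mod 4$; the congruence $a_1 \equiv a_2 \Mod 4$ rewrites this as $a_1(v_1^2 + v_2^2) \equiv 0 \Mod 4$, which forces both $v_1$ and $v_2$ to be even since $a_1$ is odd. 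Cases (i) and (iii) are handled analogously.

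Combining these two steps, and using that $\Lambda_m(L)_q = L_q$ at every $q \nmid m$, I would conclude $n \in Q(\gen(\lambda_m(L))) \iff mn \in Q(\gen(L))$ and $n \in Q(\lambda_m(L)) \iff mn \in Q(L)$. Since $L$ is regular, $Q(\gen(L)) = Q(L)$, hence $Q(\gen(\lambda_m(L))) = Q(\lambda_m(L))$, i.e., $\lambda_m(L)$ is regular. The main obstacle I anticipate is the second step at the prime $2$, where the mod-$p$ reduction trick of cases (iv)--(v) must be upgraded to a mod-$4$ argument, and the hypotheses $a_1 \equiv a_2 \Mod 4$ (respectively among $a_1,a_2,a_3$) are precisely what make this upgrade succeed. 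Once this step is in hand, and the primitivity of $\lambda_m(L)$ is verified from the hypothesis $(p, a_1 a_2 \cdots a_k) = 1$, the rest of the argument is formal.
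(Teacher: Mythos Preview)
Your proposal is correct and follows essentially the same approach as the paper: the paper writes out only case (iii), computing $\Lambda_4(L)$ explicitly as the sublattice with $b_1,b_2,b_3$ even, then arguing that any $n$ locally represented by $\Lambda_4(L)$ is (via the sublattice inclusion and regularity of $L$) represented by $L$, and finally using $4\mid n$ together with $a_1\equiv a_2\equiv a_3\Mod 4$ to force the representing vector into $\Lambda_4(L)$---exactly your template. One small simplification: the paper works directly with $\Lambda_m(L)$ rather than $\lambda_m(L)$, which avoids tracking the precise scaling factor (it need not literally be $m$, e.g.\ in case (i) when $e_2\ge 2$), since regularity is invariant under scaling.
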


\begin{proof}
Since the proofs are quite similar to each other, we only provide the proof for the case (iii).
It suffices to show that $\Lambda_4(L)$ is regular and diagonal.
Let $\{ \mathbf{v}_1,\mathbf{v}_2,\dots,\mathbf{v}_k\}$ be a basis for $L$ with respect to which
$$
L=\z \mathbf{v}_1+\z \mathbf{v}_2+\cdots+\z \mathbf{v}_k\simeq \langle a_1,p^{e_2}a_2,p^{e_3}a_3,\dots,p^{e_k}a_k\rangle.
$$
One may easily check that
\begin{align} \label{eqlemlambda}
\begin{split}
\Lambda_4(L)&=\{ \mathbf{x}\in L : Q(\mathbf{x})\equiv 0\Mod 4,\ B(\mathbf{x},\mathbf{z})\equiv 0\Mod 2\ \text{for all}\ \mathbf{z}\in L\} \\
&=\{ b_1\mathbf{v}_1+b_2\mathbf{v}_2+\cdots+b_k\mathbf{v}_k : b_i\in \z,\ b_1\equiv b_2\equiv b_3\equiv 0\Mod 2\}.
\end{split}
\end{align}
Hence $\Lambda_4(L)\simeq \langle 4a_1,4a_2,4a_3,2^{e_4}a_4,2^{e_5}a_5,\dots,2^{e_k}a_k\rangle$.
To prove the regularity of $\Lambda_4(L)$, let $n$ be a positive integer which is represented by $\Lambda_4(L)$ over $\z_p$ for every prime $p$.
Then $n$ is represented by $L_p$ for every prime $p$ since $\Lambda_4(L)_p\subseteq L_p$.
By the regularity of $L$, $n$ is represented by $L$ and thus there is a vector $(c_1,c_2,\dots,c_k)\in \z^k$ such that
$$
n=c_1^2 a_1+c_2^2 a_2+c_3^2 a_3+c_4^2 2^{e_4}a_4+c_5^2 2^{e_5}a_5+\cdots+c_k^2 2^{e_k}a_k.
$$
Note that $n$ is divisible by 4 since $n$ is represented by $\Lambda_4(L)_2$ of which the norm is $4\z$.
Since $a_1\equiv a_2\equiv a_3\Mod 4$, we have $c_1\equiv c_2\equiv c_3\equiv 0\Mod 2$.
Hence $n$ is represented by $\Lambda_4(L)$ by Equation \ref{eqlemlambda}.
This proves the regularity of $\Lambda_4(L)$ and we have (iii).
\end{proof}


Let $L$ be a regular diagonal $\z$-lattice of rank $\ge 4$.
We say that {\it $L$ is 2-stable} if
$$
\langle 1,3\rangle \ra L_2\ \ \text{or}\ \ \langle 1,7\rangle \ra L_2.
$$
For an odd prime $p$, we say that {\it $L$ is $p$-stable} if
$$
\langle 1,-1\rangle \ra L_p\ \ \text{or}\ \ L_p\simeq \langle 1,-\Delta_p,p,-p\Delta_p\rangle.
$$
Note that $L$ is $p$-stable for any prime not dividing $2dL$.
Let $T$ be any set of primes.
We define $P(L)$ to be the set of all primes dividing $2dL$.
Then by taking $\lambda_q$-transformations for some suitable values $q$ with
$$
q\in (P(L)\cap T) \cup \{4\},
$$
we obtain a regular diagonal $\z$-lattice $L'$ which is $p$-stable for every prime $p$ in $T$ by using Lemma \ref{lemlambda}.

\begin{lem} \label{lemstabler}
Let $p$ be a prime and $L$ be a $p$-stable regular diagonal $\z$-lattice of rank $\ge 4$.
Then we have
\begin{enumerate} [(i)]
\item $\{ \gamma \in \z_2 : \ord_2(\gamma)\equiv 0\Mod 2 \} \subseteq Q(L_2)$ if $p=2$,
\item $Q(L_p)=\z_p$ if $p\neq 2$.
\end{enumerate}
\end{lem}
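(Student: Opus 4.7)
The plan is to split according to the subcases in the definition of $p$-stability and, in each, to exhibit a sublattice of $L_p$ that already represents the required subset of $\z_p$; this suffices since $Q(M)\subseteq Q(L_p)$ whenever $M\ra L_p$.

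For (ii), in the first subcase $\langle 1,-1\rangle\ra L_p$, I would note that the hyperbolic plane by itself already represents every $\alpha\in\z_p$ via the identity $\alpha=((\alpha+1)/2)^2-((\alpha-1)/2)^2$, which is legitimate because $2\in\z_p^{\times}$ for odd $p$. In the second subcase $L_p\simeq\langle 1,-\Delta_p,p,-p\Delta_p\rangle$, I would orthogonally decompose this as $\langle 1,-\Delta_p\rangle\perp p\langle 1,-\Delta_p\rangle$. The key input is that $\langle 1,-\Delta_p\rangle$ is the norm form of the unramified quadratic extension $\q_p(\sqrt{\Delta_p})/\q_p$, whose norm map is surjective onto $\z_p^{\times}$; hence $\langle 1,-\Delta_p\rangle$ represents every unit of $\z_p$, and rescaling its variables by $p^k$ shows that it represents every element of even $p$-valuation (as well as $0$). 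Scaling the whole summand by $p$, the orthogonal complement $p\langle 1,-\Delta_p\rangle$ then accounts for every element of odd $p$-valuation. Together they exhaust $\z_p$, giving $Q(L_p)=\z_p$.

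For (i), I would argue directly from the guaranteed sublattice $\langle 1,3\rangle$ or $\langle 1,7\rangle$ of $L_2$. The target is to represent every $\gamma\in\z_2$ with $\ord_2(\gamma)$ even. Since the squareclasses of $\z_2^{\times}$ are detected modulo $8$ and are represented by $\{1,3,5,7\}$, I would compile a short table of $x^2+3y^2\pmod 8$ and of $x^2+7y^2\pmod 8$ and check that each of these two binary forms hits all four unit residues $1,3,5,7\pmod 8$. The Local Square Theorem \cite[63:1]{OM} then upgrades this to a representation of every $u\in\z_2^{\times}$, and rescaling variables by $2^k$ produces a representation of any $2^{2k}u$, which covers all of $\{\gamma\in\z_2:\ord_2(\gamma)\equiv 0\pmod 2\}$.

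The only nonroutine ingredient is the surjectivity of the unit-norm map for the unramified quadratic extension used in the second subcase of (ii); once that standard fact is cited, both parts reduce to short case checks, so I do not anticipate any serious obstacle.
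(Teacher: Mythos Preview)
Your proposal is correct and follows the same route as the paper, which simply records that the lemma ``follows immediately from the definition of $p$-stability''; you have merely written out the case check that the paper leaves implicit. The one place where you invoke more machinery than necessary is the second subcase of (ii): rather than citing surjectivity of the norm from the unramified quadratic extension, it suffices to note (by the standard mod-$p$ counting plus Hensel argument) that any unimodular binary $\z_p$-lattice with $p$ odd represents every unit of $\z_p$, but this is a matter of taste and not a gap.
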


\begin{proof}
It follows immediately from the definition of $p$-stability.
\end{proof}


Let $P$ denote the set of all primes.
For a prime $p$, we define two sets $A_+(p)$ and $A_-(p)$ by
$$
A_{\pm}(p)=\left\{ q\in P : q\le p,\ \left(\frac{q}{p}\right)=\pm 1\right\},
$$
and put $A'_{\pm}(p)=A_{\pm}(p)-\{2\}$.


\begin{lem} \label{lemunstable}
Let $L$ be a regular diagonal $\z$-lattice of rank $\ge 4$.
Then $L$ is $p$-stable for every prime $p$ greater than 5.
\end{lem}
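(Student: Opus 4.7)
I argue by contradiction: suppose $L = \langle a_1, \ldots, a_k\rangle$ with $k \ge 4$ is a regular diagonal $\z$-lattice that is not $p$-stable for some prime $p > 5$, and I will exhibit an integer $n \in Q(\gen(L)) \setminus Q(L)$.

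First, by repeatedly applying $\lambda_q$-transformations from Lemma \ref{lemlambda} for primes $q \ne p$, I replace $L$ with a regular diagonal lattice which is $q$-stable for every prime $q \ne p$ and still not $p$-stable (such transformations leave $L_p$ unchanged up to unit scaling). By Lemma \ref{lemstabler}, this yields $Q(L_q) = \z_q$ for every odd prime $q \ne p$, and $Q(L_2)$ contains every element of $\z_2$ of even $2$-adic valuation. Next, examine the Jordan decomposition of $L_p$: primitivity of $L$ forces the unimodular Jordan component to be nontrivial; a unimodular $\z_p$-lattice of rank $\ge 3$ is isotropic (hence contains $\langle 1, -1\rangle$), and a binary isotropic unimodular lattice is the hyperbolic plane. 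Thus failure of $p$-stability forces, after reindexing, either
(a) $a_1 \in \z_p^\times$ with $p \mid a_j$ for all $j \ge 2$, or
(b) $a_1, a_2 \in \z_p^\times$ with $-a_1 a_2 \notin (\z_p^\times)^2$, $p \mid a_j$ for all $j \ge 3$, and $L_p \not\simeq \langle 1, -\Delta_p, p, -p\Delta_p\rangle$.
In both cases, every $a_j$ past the unimodular block satisfies $a_j \ge p$.

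The witness $n$ is extracted by a counting argument. Any positive integer $n < p$ representable by $L$ must be representable by the unimodular block alone: $n = a_1 b^2$ in case (a), or $n = a_1 b_1^2 + a_2 b_2^2$ in case (b). In case (a), integers in $[1, p-1]$ locally represented at $p$ form a coset of $(\z_p^\times)^2$ of size $(p-1)/2$, while the values $\{a_1 b^2 : b \in \z_{\ge 1}\} \cap [1, p-1]$ number at most $\lfloor \sqrt{(p-1)/a_1}\rfloor \le \lfloor \sqrt{p-1}\rfloor$. The inequality $(p-1)/2 > \lfloor \sqrt{p-1}\rfloor$ holds for all $p > 5$ (sharply, with equality at $p = 5$), yielding a candidate $n$. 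Local representability of $n$ at the remaining primes is then automatic at $q \nmid dL$ (where $L_q$ is unimodular of rank $\ge 4$) and at $q = 2$ when $\ord_2(n)$ is even; the borderline choices may require a direct check against the explicit structure of $L_2$. In case (b), a parallel count uses the density-zero value set of the binary $\langle a_1, a_2\rangle$.

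The principal obstacle is case (b), where $L_p$ represents all $p$-units, so the mod-$p$ restriction used in case (a) vanishes, and the witness must be teased out from the interplay of the small values missed by $\langle a_1, a_2\rangle$, the exclusion $L_p \not\simeq \langle 1, -\Delta_p, p, -p\Delta_p\rangle$, and the local conditions at other primes. I anticipate a short case analysis on $(a_1, a_2)$ and on whether $k = 4$ or $k \ge 5$ closes this case. The role of the hypothesis $p > 5$ is exactly the strictness of the counting inequality in case (a).
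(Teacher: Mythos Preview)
Your setup (reduction via $\lambda_q$ to $q$-stable for all $q\ne p$, and the dichotomy into unimodular $p$-rank $1$ versus $2$) matches the paper exactly. But you have inverted the difficulty of the two cases, and your case (a) contains a real gap.

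\textbf{Case (b) is the easy one.} After stabilization, every odd integer $\le p-1$ is represented by $L_2$ (Lemma~\ref{lemstabler}(i)), by $L_q$ for odd $q\ne p$ (Lemma~\ref{lemstabler}(ii)), and by $L_p$ (since the anisotropic binary unimodular part represents all $p$-units). Hence $\{1,3,5\}\subset Q(\gen(L))=Q(L)$, so $\{1,3,5\}\subset Q(\langle a_1,a_2\rangle)$. A direct check shows no positive binary form represents all of $1,3,5$. That is the entire argument; no density estimate and no split on $k=4$ versus $k\ge5$ is needed.

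\textbf{Case (a) is where the work lies, and your count is incomplete.} Your inequality $(p-1)/2>\lfloor\sqrt{p-1}\rfloor$ only produces an $n\in[1,p-1]$ with $n\to L_p$ and $n\nrightarrow\langle a_1\rangle$; it does \emph{not} guarantee $n\to L_2$. Dismissing this as a ``borderline direct check'' is precisely where the argument fails: the paper's honest count,
\[
|H|\;\ge\;(p-1)-\tfrac{p-1}{2}-3\Bigl\lceil\tfrac{p-1}{8}\Bigr\rceil,
\]
which does subtract the integers potentially missed at $2$, yields only $p\le 83$, not $p\le 5$. The paper then finishes with a genuine case analysis: for $a_1\ge2$ one shows $\bigl(\tfrac{a_1}{p}\bigr)=-1$ and that every odd prime $q<p$ with $\bigl(\tfrac{q}{p}\bigr)=-1$ must equal $a_1$, forcing $|A'_-(p)|\le1$ and hence $p\in\{3,5,11\}$, with $p=11$ eliminated via the witness $13$; for $a_1=1$ one similarly gets $|A'_+(p)|=0$, hence $p\in\{3,5,7\}$, with $p=7$ eliminated by forcing $7,\{7,14\}$ into $L$ and then observing $2\to\gen(L)$ but $2\nrightarrow L$. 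This arithmetic input (bounds on the least prime in a prescribed Legendre class, plus explicit witnesses for $p=7,11$) is missing from your plan and is not recoverable from the bare inequality you wrote down.
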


\begin{proof}
Suppose that $L$ is not $p$-stable for a prime $p$ greater than 5.
By Lemma \ref{lemlambda}, we may assume that $L$ is $q$-stable for all primes $q$ except $p$.
We write
$$
L=\langle a_1,p^{e_2}a_2,p^{e_3}a_3,\dots,p^{e_k}a_k\rangle,\ \ (p,a_1a_2\cdots a_k)=1\ \ \text{and}\ \ 0\le e_2\le e_3\le \cdots \le e_k.
$$
We define a set $H$ by
$$
H=\{ n\in Q(\gen(L)) :  1\le n\le p-1\}.
$$
Note that $H=\{ n\in Q(L) : 1\le n\le p-1\}$ since $L$ is regular.

Assume that $e_2=0$.
Since $L$ is not $p$-stable, we have $0<e_3$, and this implies that $p\le p^{e_i}a_i$ for any $i=3,4,\dots,k$. 
Hence we have $H\subset Q(\langle a_1,a_2\rangle)$.
Since $p>5$, we have
$$
\{1,3,5\} \subset H\subset Q(\langle a_1,a_2\rangle)
$$
by Lemma \ref{lemstabler}.
However, one may easily check that there does not exist binary $\z$-lattice $K$ satisfying $\{1,3,5\}\subset Q(K)$.

Thus we may further assume that $e_2>0$.
Then one may see that $H\subset Q(\langle a_1\rangle)$.
This implies that
\begin{equation} \label{equn}
\vert H\vert=\vert \{ x_1\in \n : a_1x_1^2\le p-1\} \vert \le \left[\sqrt{p-1}\right],
\end{equation}
where $[\cdot]$ is the greatest integer function.
On the other hand, we have
$$
H=\{ 1\le n\le p-1\}-(\{ 1\le n\le p-1 : n\nra L_2\} \cup \{1\le n\le p-1 : n\nra L_p\}),
$$
since $Q(L_q)=\z_q$ for any odd prime $q\neq p$ by Lemma \ref{lemstabler}(ii).
Since $L$ is 2-stable, we have
$$
\{ 1\le n\le p-1 : n\nra L_2\} \subseteq \{ 1\le n\le p-1 : n\equiv 0,2,6\Mod 8\}
$$
by Lemma \ref{lemstabler}(i).
Thus we have
$$
\vert \{ 1\le n\le p-1 : n\nra L_2\} \vert \le 3\left\lceil \frac{p-1}{8}\right\rceil,
$$
where $\lceil \cdot \rceil$ is the ceiling function.
Note that
$$
\vert \{ 1\le n\le p-1 : n\nra L_p\} \vert=\frac{p-1}{2}.
$$
It follows that
$$
\vert H \vert \ge p-1-3\left\lceil \frac{p-1}{8}\right\rceil-\frac{p-1}{2}.
$$
From this and \eqref{equn}, we have
$$
p-1-\frac{p-1}{2}-3\left\lceil \frac{p-1}{8}\right\rceil \le \vert H\vert \le \left[\sqrt{p-1}\right],
$$
which implies that $p\le 83$.
Assume further that $a_1\ge 2$.
Then $1\nra L$.
Since $L$ is regular and $1\ra L_q$ for any $q\neq p$, we must have $1\nra L_p$.
Hence the unimodular component of a Jordan decomposition of $L_p$ is isometric to $\langle \Delta_p\rangle$ and it follows that $\left(\dfrac{a_1}{p}\right)=-1$.
In this case, we show that $\vert A'_-(p)\vert \le 1$.
Take any prime $q$ in $A'_-(p)$.
Then one may easily check that $q\ra \gen(L)$.
Hence $q\in H$ and thus $q=a_1x^2$ for some $x\in \n$.
It follows that $q=a_1$.
This shows that $A'_-(p)\subseteq \{a_1\}$ and thus we have $\vert A'_-(p)\vert \le 1$.
One may directly check that
$$
\{ p\in P: 3\le p\le 83,\ \vert A'_-(p)\vert \le 1\}=\{3,5,11\}.
$$
Note that $A'_-(11)=\{7\}$.
Now, if $p=11$, then one may easily deduce that
$$
13\ra \gen(\langle 7,11^{e_2}a_2,11^{e_3}a_3,\dots,11^{e_k}a_k\rangle),
$$
whereas 13 cannot be represented by $\langle 7,11^{e_2}a_2,11^{e_3}a_3,\dots,11^{e_k}a_k\rangle$.
This is absurd and we have $p\in \{3,5\}$ when $a_1\ge 2$.
Assume that $a_1=1$.
Then one may easily deduce that $\vert A'_+(p)\vert=0$.
One may directly check that
$$
\{ p\in P : 3\le p\le 83,\ \vert A'_+(p)\vert =0\} =\{3,5,7\},
$$
and thus we have $p\in \{3,5,7\}$.
Suppose that $p=7$.
Then one may easily deduce that
$$
11\ra L=\langle 1,7^{e_2}a_2,7^{e_3}a_3,\dots,7^{e_k}a_k\rangle.
$$
Thus we may assume that $7^{e_2}a_2=7$ after rearrangement, if necessary.
Then one may easily deduce that $15\ra L$ and we may assume that $7^{e_3}a_3\in \{7,14\}$ since $15\nra \langle 1,7\rangle$.
Thus we have
$$
L=\langle 1,7,7,7^{e_4}a_4,\dots,7^{e_k}a_k\rangle \ \ \text{or}\ \ L=\langle 1,7,14,7^{e_4}a_4,\dots,7^{e_k}a_k\rangle.
$$
In both cases, we have $Q(L_2)=\z_2$.
It follows that $2\ra L_r$ for every prime $r$.
Since $L$ is regular, we have $2\ra L$.
This is absurd, and we have $p\in \{3,5\}$ when $a_1=1$.
This completes the proof.
\end{proof}


\begin{rmk}
Note that, for a prime $p$, the followings are direct consequences of the results in \cite{G} and \cite{P}.
\begin{enumerate} [(i)]
\item $\vert A'_+(p)\vert \ge 1$ if $p\ge 11$,
\item $\vert A'_-(p)\vert \ge 2$ if $p\ge 13$.
\end{enumerate}
\end{rmk}


\begin{lem} \label{lembound}
Let $L=\langle a_1,a_2,\dots,a_k\rangle$ be a regular diagonal $\z$-lattice with $a_1\le a_2\le \cdots \le a_k$ and let $n$ be a positive integer represented by $L_q$ for $q=2,3,5$.
If $n$ is not represented by $\langle a_1,a_2,\dots,a_i\rangle$ for some $1\le i\le k-1$, then we have $a_{i+1}\le n$.
\end{lem}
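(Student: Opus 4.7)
The plan is to use the regularity of $L$, once we know $n$ is locally represented at \emph{every} prime, to produce a global representation $n=\sum_j a_j x_j^2$, and then derive the desired bound by a size comparison. Since the local hypothesis is supplied only at $q=2,3,5$, the main preparatory step is to upgrade it to local representability at every prime.

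First, working in the setting of this paper (rank $\ge 4$), I would invoke Lemma \ref{lemunstable} to conclude that $L$ is $p$-stable for every prime $p>5$, and then Lemma \ref{lemstabler}(ii) gives $Q(L_p)=\z_p$ for each such $p$. Combined with the hypothesis $n \in Q(L_q)$ for $q\in \{2,3,5\}$, this shows that $n$ is locally represented by $L$ at every prime. By the regularity of $L$, we obtain integers $x_1,\dots,x_k$ with
\[
n=a_1x_1^2+a_2x_2^2+\cdots+a_kx_k^2.
\]

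Finally, I would argue by contrapositive. Suppose $a_{i+1}>n$. Then for every $j\ge i+1$ we have $a_j\ge a_{i+1}>n$, which forces $x_j=0$ in the representation above. Hence $n=a_1x_1^2+\cdots+a_ix_i^2$, so $n$ is represented by $\langle a_1,\dots,a_i\rangle$, contradicting the hypothesis. Therefore $a_{i+1}\le n$.

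The proof has essentially no obstacle of its own: the heavy lifting lives in Lemma \ref{lemunstable}. The only observation to make here is that local representability at the three small primes $2,3,5$ combines with automatic stability at all larger primes (and the identity at the unique remaining prime $p=2$, where the hypothesis is assumed directly) to trigger the regularity of $L$, after which the size comparison is immediate.
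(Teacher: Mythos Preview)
Your proof is correct and follows essentially the same route as the paper: invoke Lemma~\ref{lemunstable} together with Lemma~\ref{lemstabler}(ii) to promote local representability at $2,3,5$ to local representability at all primes, apply regularity to get a global representation, and then use the size comparison (which you spell out explicitly via contrapositive, while the paper leaves it implicit). The only caveat, which you already note, is that Lemma~\ref{lemunstable} requires rank $\ge 4$; the paper's own proof relies on this implicitly as well.
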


\begin{proof}
By Lemma \ref{lemstabler}(ii) and Lemma \ref{lemunstable}, $n$ is represented by $L_q$ for every prime $q$ greater than 5.
From this and the assumption, it follows that $n$ is locally represented by $L$.
If $a_{i+1}>n$, then $n$ cannot be represented by $L$.
Hence we have $a_{i+1}\le n$.
\end{proof}

\section{Ternary sections of regular diagonal $\z$-lattices}
Recall that, for a subset $S\subseteq \n \cup \{0\}$ and a $\z$-lattice $K$,
$$
\psi(S,K)=\begin{cases}\min(S-Q(K))&\text{if}\ \ S-Q(K)\neq \emptyset,\\
\infty&\text{otherwise}.\end{cases}
$$
Define $A$ to be the Cartesian product
$$
A=\{1,3,5,7\} \times \{1,2\} \times \{1,2\}.
$$
For each triple $(\delta_2,\delta_3,\delta_5)\in A$, we put
$$
S(\delta_2,\delta_3,\delta_5)=\{ n\in \n : n\equiv \delta_2\Mod 8,\ n\equiv \delta_3\Mod 3,\ n\equiv \pm \delta_5\Mod 5\},
$$
and denote the $i$-th smallest element in $S(\delta_2,\delta_3,\delta_5)$ by $s_i(\delta_2,\delta_3,\delta_5)$ so that
$$
S(\delta_2,\delta_3,\delta_5)=\{s_1(\delta_2,\delta_3,\delta_5)<s_2(\delta_2,\delta_3,\delta_5)<\cdots<s_i(\delta_2,\delta_3,\delta_5)<\cdots \}.
$$
For example,
$$
S(1,1,1)=\{1<49<121<169<241<289<\cdots \},
$$
and $s_1(1,1,1)=1$, $s_2(1,1,1)=49$, etc.
Define
$$
U_1(\delta_2,\delta_3,\delta_5)=\{ b_1\in \n : b_1\le s_1(\delta_2,\delta_3,\delta_5)\},
$$
and
$$
U_2(\delta_2,\delta_3,\delta_5)=\{ (b_1,b_2)\in \n^2 : b_1\in U_1(\delta_2,\delta_3,\delta_5),\ b_1\le b_2\le \psi(S(\delta_2,\delta_3,\delta_5),\langle b_1\rangle) \}.
$$
Let $(b_1,b_2)$ be a vector in $U_2(\delta_2,\delta_3,\delta_5)$.
For each prime $q\in \{2,3,5\}$, we define a set $T(\delta_2,\delta_3,\delta_5,b_1,b_2,q)$ to be
$$
\begin{cases} \{ n\in \n : n\ra \langle b_1,b_2,\delta_q\rangle \ \ \text{over}\ \ \z_q \}&\text{if}\ \ \left( \dfrac{\delta_q}{q}\right)\neq \left( \dfrac{b_i}{q}\right) \ \text{for all}\ i=1,2,\\
\{ n\in \n : n\ra \langle b_1,b_2\rangle \ \ \text{over}\ \ \z_q \} &\text{otherwise},\end{cases}
$$
and put $T(\delta_2,\delta_3,\delta_5,b_1,b_2)=\bigcap_{q\in \{2,3,5\}} T(\delta_2,\delta_3,\delta_5,b_1,b_2,q)$.
We define $U_3(\delta_2,\delta_3,\delta_5)$ to be the set
$$
\{ (b_1,b_2,b_3)\in \n^3 : (b_1,b_2)\in U_2(\delta_2,\delta_3,\delta_5),\ b_2\le b_3\le \psi(T(\delta_2,\delta_3,\delta_5,b_1,b_2),\langle b_1,b_2\rangle)\}.
$$
Put $U_3=\bigcup_{(\delta_2,\delta_3,\delta_5)\in A} U_3(\delta_2,\delta_3,\delta_5)$.
Note that $\vert U_3\vert=103$ and all diagonal ternary $\z$-lattices $\langle a_1,a_2,a_3\rangle$ for which the vector $(a_1,a_2,a_3)$ is in $U_3$ are listed in Table \ref{tableu3}.


\begin{table}[ht]
\caption{Ternary $\z$-lattices $\langle a_1,a_2,a_3\rangle$ for which $(a_1,a_2,a_3)\in U_3$}
\vskip -10pt
\begin{tabular}{|llll|}
\hline \rule[-2mm]{-1mm}{6mm}
1. $\langle 1,1,1\rangle$ &
2. $\langle 1,1,2\rangle$ &
3. $\langle 1,1,3\rangle$ &
4. $\langle 1,1,4\rangle$ \\
\hline \rule[-2mm]{-1mm}{6mm}
5. $\langle 1,1,5\rangle$ &
6. $\langle 1,1,6\rangle$ &
7. $\langle 1,1,8\rangle$ &
8. $\langle 1,1,9\rangle$ \\
\hline \rule[-2mm]{-1mm}{6mm}
9. $\langle 1,1,12\rangle$ &
10. $\langle 1,1,16\rangle$ &
11. $\langle 1,1,24\rangle$ &
12. $\langle 1,2,2\rangle$ \\
\hline \rule[-2mm]{-1mm}{6mm}
13. $\langle 1,2,3\rangle$ &
14. $\langle 1,2,4\rangle$ &
15. $\langle 1,2,5\rangle$ &
16. $\langle 1,2,6\rangle$ \\
\hline \rule[-2mm]{-1mm}{6mm}
17. $\langle 1,2,8\rangle$ &
18. $\langle 1,2,10\rangle$ &
19. $\langle 1,2,16\rangle$ &
20. $\langle 1,2,32\rangle$ \\
\hline \rule[-2mm]{-1mm}{6mm}
21. $\langle 1,3,3\rangle$ &
22. $\langle 1,3,4\rangle$ &
23. $\langle 1,3,6\rangle$ &
24. $\langle 1,3,9\rangle$ \\
\hline \rule[-2mm]{-1mm}{6mm}
25. $\langle 1,3,10\rangle$ &
26. $\langle 1,3,12\rangle$ &
27. $\langle 1,3,18\rangle$ &
28. $\langle 1,3,30\rangle$ \\
\hline \rule[-2mm]{-1mm}{6mm}
29. $\langle 1,3,36\rangle$ &
30. $\langle 1,4,4\rangle$ &
31. $\langle 1,4,6\rangle$ &
32. $\langle 1,4,8\rangle$ \\
\hline \rule[-2mm]{-1mm}{6mm}
33. $\langle 1,4,12\rangle$ &
34. $\langle 1,4,16\rangle$ &
${}^\dag$35. $\langle 1,4,20\rangle$ &
36. $\langle 1,4,24\rangle$ \\
\hline \rule[-2mm]{-1mm}{6mm}
37. $\langle 1,4,36\rangle$ &
38. $\langle 1,5,5\rangle$ &
39. $\langle 1,5,8\rangle$ &
40. $\langle 1,5,10\rangle$ \\
\hline \rule[-2mm]{-1mm}{6mm}
41. $\langle 1,5,25\rangle$ &
42. $\langle 1,5,40\rangle$ &
43. $\langle 1,6,6\rangle$ &
44. $\langle 1,6,9\rangle$ \\
\hline \rule[-2mm]{-1mm}{6mm}
45. $\langle 1,6,16\rangle$ &
46. $\langle 1,6,18\rangle$ &
47. $\langle 1,6,24\rangle$ &
48. $\langle 1,8,8\rangle$ \\
\hline \rule[-2mm]{-1mm}{6mm}
49. $\langle 1,8,16\rangle$ &
50. $\langle 1,8,24\rangle$ &
51. $\langle 1,8,32\rangle$ &
52. $\langle 1,8,40\rangle$ \\
\hline \rule[-2mm]{-1mm}{6mm}
53. $\langle 1,8,64\rangle$ &
54. $\langle 1,9,9\rangle$ &
55. $\langle 1,9,12\rangle$ &
56. $\langle 1,9,24\rangle$ \\
\hline \rule[-2mm]{-1mm}{6mm}
57. $\langle 1,10,30\rangle$ &
58. $\langle 1,12,12\rangle$ &
${}^{\dag}$59. $\langle 1,12,24\rangle$ &
60. $\langle 1,12,36\rangle$ \\
\hline \rule[-2mm]{-1mm}{6mm}
61. $\langle 1,16,16\rangle$ & 
62. $\langle 1,16,24\rangle$ & 
${}^{\dag}$63. $\langle 1,16,32\rangle$ & 
64. $\langle 1,16,48\rangle$ \\
\hline \rule[-2mm]{-1mm}{6mm}
${}^{\dag}$65. $\langle 1,16,144\rangle$ &
66. $\langle 1,24,24\rangle$ &
67. $\langle 1,24,72\rangle$ & 
68. $\langle 1,40,120\rangle$ \\
\hline \rule[-2mm]{-1mm}{6mm}
69. $\langle 1,48,144\rangle$ & 
70. $\langle 2,2,3\rangle$ &
71. $\langle 2,3,3\rangle$ &
72. $\langle 2,3,6\rangle$ \\
\hline \rule[-2mm]{-1mm}{6mm}
73. $\langle 2,3,8\rangle$ & 
74. $\langle 2,3,9\rangle$ & 
75. $\langle 2,3,12\rangle$ &
76. $\langle 2,3,18\rangle$ \\
\hline \rule[-2mm]{-1mm}{6mm}
77. $\langle 2,3,48\rangle$ & 
78. $\langle 2,5,6\rangle$ & 
79. $\langle 2,5,10\rangle$ & 
80. $\langle 2,5,15\rangle$ \\
\hline \rule[-2mm]{-1mm}{6mm}
81. $\langle 2,6,9\rangle$ &
82. $\langle 2,6,15\rangle$ & 
83. $\langle 3,3,4\rangle$ & 
${}^{\dag \dag}$84. $\langle 3,3,7\rangle$ \\
\hline \rule[-2mm]{-1mm}{6mm}
85. $\langle 3,3,8\rangle$ &
86. $\langle 3,4,4\rangle$ &
${}^{\dag}$87. $\langle 3,4,8\rangle$ & 
88. $\langle 3,4,12\rangle$ \\
\hline \rule[-2mm]{-1mm}{6mm} 
89. $\langle 3,4,36\rangle$ & 
90. $\langle 3,8,8\rangle$ &
91. $\langle 3,8,12\rangle$ &
92. $\langle 3,8,24\rangle$ \\ 
\hline \rule[-2mm]{-1mm}{6mm}
93. $\langle 3,8,48\rangle$ & 
94. $\langle 3,8,72\rangle$ & 
95. $\langle 3,10,30\rangle$ &
96. $\langle 3,16,48\rangle$ \\
\hline \rule[-2mm]{-1mm}{6mm}
97. $\langle 3,40,120\rangle$ &
${}^{\dag}$98. $\langle 5,6,9\rangle$ &
99. $\langle 5,6,15\rangle$ &
100. $\langle 5,8,24\rangle$ \\
\hline \rule[-2mm]{-1mm}{6mm}
101. $\langle 5,8,40\rangle$ &
102. $\langle 8,9,24\rangle$ &
103. $\langle 8,15,24\rangle$ & \\
\hline
\end{tabular}
\label{tableu3}
\end{table}

For $j=1,2,\dots,103$, let $J(j)$ be the $j$-th diagonal ternary $\z$-lattice in Table \ref{tableu3}.
We will see that every candidate for the ternary section of a regular diagonal $\z$-lattice of rank greater than or equal to 4 does appear in Table \ref{tableu3}.
Before that, we analyse the list of diagonal ternary $\z$-lattices $\{ J(j) : 1\le j\le 103\}$.
Among 103 lattices, there are exactly six irregular diagonal ternary $\z$-lattices
$$
J(j),\ \ j\in \{35,59,63,65,87,98\},
$$
marked with $\dag$ in Table \ref{tableu3}.
For each of the six $\z$-lattices $J$, the smallest positive integer which is represented locally but not globally by $J$ will be given in Table \ref{tableirr}.


\begin{table}[ht]
\caption{$t(J)=\psi(Q(\gen(J)),J)$ for some irregular $\z$-lattice $J$}
\vskip -10pt
\begin{tabular}{|c|c|c|c|c|c|c|}
\hline
$J$ & $\langle 1,4,20\rangle$ & $\langle 1,12,24\rangle$ & $\langle 1,16,32\rangle$ & $\langle 1,16,144\rangle$ & $\langle 3,4,8\rangle$ & $\langle 5,6,9\rangle$ \\ 
\hline
$t(J)$ & 77 & 69 & 161 & 473 & 23 & 17 \\
\hline
\end{tabular}

\label{tableirr}
\end{table}


On the other hand, there are exactly six regular diagonal ternary $\z$-lattices $J$ satisfying $Q(J_7)\neq \z_7$ (for this, see \cite{J} or \cite{JP}).
For each of the six $\z$-lattices $J$, we define $u(J)$ to be the smallest positive integer which is not represented by $J_7$ and is represented by $J_q$ for every prime $q\neq 7$. We provide the values $u(J)$ in Table \ref{tablereg7}.
By Lemma \ref{lembound}, a necessary condition for $J=\langle a_1,a_2,a_3\rangle$ being a ternary section of a regular diagonal $\z$-lattice $L=\langle a_1,a_2,\dots,a_k\rangle$ ($a_1\le a_2\le \cdots \le a_k$) of rank $k\ge 4$ is that $a_3\le u(J)$.
Hence $\langle 3,3,7\rangle$ passed the qualification and appear in Table \ref{tableu3} as $J(84)$, marked with $\dag \dag$, and the other five $\z$-lattices are disqualified from being candidates for the ternary section of a regular diagonal $\z$-lattice of rank $\ge 4$.


\begin{table}[ht]
\caption{$u(J)$ for some regular diagonal ternary $\z$-lattice $J$}
\vskip -10pt
\begin{tabular}{|c|c|c|c|c|c|c|}
\hline
$J$ & $\langle 1,1,21\rangle$ & $\langle 1,9,21\rangle$ & $\langle 1,21,21\rangle$ & $\langle 3,3,7\rangle$ & $\langle 3,7,7\rangle$ & $\langle 3,7,63\rangle$ \\ 
\hline
$u(J)$ & 7 & 7 & 3 & 21 & 1 & 1 \\
\hline
\end{tabular}

\label{tablereg7}
\end{table}


\begin{lem} \label{lemternary}
Let $L=\langle a_1,a_2,\dots,a_k\rangle$ be a regular diagonal $\z$-lattice of rank $k\ge 4$ such that $a_1\le a_2\le \cdots \le a_k$.
Then the ternary section $\langle a_1,a_2,a_3\rangle$ of $L$ appears in Table \ref{tableu3}.
\end{lem}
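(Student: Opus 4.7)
The plan is to exhibit a triple $(\delta_2, \delta_3, \delta_5) \in A$ for which the coefficient vector $(a_1, a_2, a_3)$ belongs to $U_3(\delta_2, \delta_3, \delta_5)$; once this is done, the ternary $\z$-lattice $\langle a_1, a_2, a_3\rangle$ appears in Table \ref{tableu3} by construction of $U_3$. The central observation driving the proof is that, by Lemmas \ref{lemstabler}(ii) and \ref{lemunstable}, $Q(L_p) = \z_p$ for every prime $p > 5$; hence any positive integer $n$ with $n \in Q(L_q)$ for each $q \in \{2,3,5\}$ is locally represented by $L$ everywhere, and regularity of $L$ promotes this to $n \in Q(L)$. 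This converts purely local information at the three small primes into the bounds on the entries $a_i$ that are supplied by Lemma \ref{lembound}.

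First I would select $(\delta_2, \delta_3, \delta_5) \in A$ so that $S(\delta_2, \delta_3, \delta_5) \subseteq Q(L_q)$ for each $q \in \{2, 3, 5\}$: the unimodular Jordan component of $L_q$ is rich enough, by the rank-$\geq 4$ hypothesis, to represent every unit in some residue class modulo $8$ (for $q=2$) or modulo $q$ (for $q = 3, 5$), and $\delta_q$ is chosen accordingly. In particular $s_1(\delta_2, \delta_3, \delta_5) \in Q(L)$; since $L$ is diagonal this forces $a_1 \leq s_1(\delta_2, \delta_3, \delta_5)$, so $(a_1) \in U_1(\delta_2, \delta_3, \delta_5)$. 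Applying Lemma \ref{lembound} with $i = 1$ to any $n \in S(\delta_2, \delta_3, \delta_5)$ not represented by $\langle a_1\rangle$ yields $a_2 \leq n$, hence $a_2 \leq \psi(S(\delta_2, \delta_3, \delta_5), \langle a_1\rangle)$, placing $(a_1, a_2)$ in $U_2(\delta_2, \delta_3, \delta_5)$. Finally one verifies that $T(\delta_2, \delta_3, \delta_5, a_1, a_2) \subseteq Q(L_q)$ for each $q \in \{2, 3, 5\}$: the matching-residuosity branch of the definition of $T$ gives the inclusion trivially since $\langle a_1, a_2\rangle \subseteq L$, while the non-matching branch requires exhibiting an embedded sublattice of the shape $\langle a_1, a_2, \delta_q\rangle$ inside $L_q$, which follows from the Jordan structure of $L_q$ because some component of $L$ beyond $a_1, a_2$ must supply a unit of quadratic class $\delta_q$ at $q$. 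Lemma \ref{lembound} with $i = 2$ then yields $a_3 \leq \psi(T(\delta_2, \delta_3, \delta_5, a_1, a_2), \langle a_1, a_2\rangle)$, placing $(a_1, a_2, a_3)$ in $U_3(\delta_2, \delta_3, \delta_5) \subseteq U_3$.

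The hard part will be the local analysis needed to justify the choice of $(\delta_2, \delta_3, \delta_5)$, and more delicately the inclusion $T(\delta_2, \delta_3, \delta_5, a_1, a_2) \subseteq Q(L_q)$ in the non-matching branch at each $q \in \{2, 3, 5\}$. The prime $q = 2$ is the most sensitive: the mod-$8$ refinement and the restrictive conditions visible in Lemma \ref{lemred2} force a case split on the $2$-adic Jordan shape of $L$, and one must show that whenever neither $a_1$ nor $a_2$ contributes a unit in the class $\delta_2$, some later component of $L$ does. A careful enumeration of these local possibilities, combined with the fact that the only ternary shapes surviving the resulting bounds are those already recorded in Table \ref{tableu3}, will complete the argument.
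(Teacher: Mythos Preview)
Your strategy is exactly the paper's, but you are missing the one observation that collapses the ``hard part'' you anticipate into a single line, and without it your choice of $\delta_q$ is not quite pinned down enough to work.

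The unimodular component at each $q\in\{2,3,5\}$ exists because $L$ is \emph{primitive} ($\mathfrak{s}L=\z$), not because $\rk L\ge 4$: primitivity supplies, for each such $q$, an index $i_q$ with $(q,a_{i_q})=1$. The paper takes $\delta_q$ to be the element of $\{1,3,5,7\}$ (for $q=2$) or $\{1,2\}$ (for $q=3,5$) lying in the square class of this \emph{particular} $a_{i_q}$. With that specific choice the non-matching branch is immediate: if $\bigl(\tfrac{\delta_q}{q}\bigr)\ne\bigl(\tfrac{a_i}{q}\bigr)$ for $i=1,2$, then since $a_{i_q}$ \emph{does} share the class of $\delta_q$ one concludes $i_q\notin\{1,2\}$, whence $\langle a_1,a_2,a_{i_q}\rangle\subseteq L$ and over $\z_q$ this sublattice is isometric to $\langle a_1,a_2,\delta_q\rangle$. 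No Jordan case split at $2$ is required, and once $(a_1,a_2,a_3)\in U_3$ is established the result follows by the very definition of Table~\ref{tableu3}; there is nothing further to enumerate or to check against the table.

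Your looser recipe ``choose $\delta_q$ so that $S\subseteq Q(L_q)$'' is genuinely weaker and can fail at the non-matching step. For odd $q$, if only $a_1$ and $a_2$ among the $a_i$ are coprime to $q$ and both lie in the same residue class, the binary form $\langle a_1,a_2\rangle_q$ still represents units of the other class, so your condition permits choosing $\delta_q$ there; but then no $a_i$ with $i\ge 3$ is a unit at $q$ at all, and $Q(\langle a_1,a_2,\delta_q\rangle_q)$ can properly contain $Q(L_q)$ (for instance it picks up elements of $q$-adic order $1$ that $L_q$ need not represent). Tying $\delta_q$ to a concrete coefficient $a_{i_q}$ is precisely what makes the argument both correct and short.
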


\begin{proof}
Since $L$ is primitive, for each $q\in \{2,3,5\}$, there is an index $i_q$ with $1\le i_q\le k$ such that $(q,a_{i_q})=1$.
There is a triple $(\epsilon_2,\epsilon_3,\epsilon_5)\in A$ such that $\epsilon_q\in a_{i_q}(\z_q^{\times})^2$ for $q=2,3,5$.
Then every integer in the set $S(\epsilon_2,\epsilon_3,\epsilon_5)$ is represented by $L_q$ for every $q\in \{2,3,5\}$.
Hence by Lemma \ref{lembound}, we have
$$
a_1\le s_1(\epsilon_2,\epsilon_3,\epsilon_5),\ \ \text{and}\ \ a_2\le \psi(S(\epsilon_2,\epsilon_3,\epsilon_5), \langle a_1\rangle).
$$
This implies that $(a_1,a_2)\in U_2(\epsilon_2,\epsilon_3,\epsilon_5)$.

For $q=2,3,5$, we note that every integer that is represented by $\langle a_1,a_2\rangle$ over $\z_q$ is also represented by $L$ over $\z_q$.
Furthermore, if $\left(\dfrac{\epsilon_q}{q}\right) \neq \left(\dfrac{a_i}{q}\right)$ for all $i=1,2$,
then it follows that $i_q\neq 1,2$, and thus every integer represented by $\langle a_1,a_2,a_{i_q}\rangle$ over $\z_q$ is represented by $L$ over $\z_q$.
Therefore, we have $(a_1,a_2,a_3)\in U_3$ by Lemma \ref{lembound}.
This completes the proof.
\end{proof}

\section{Minimal regular diagonal $\z$-lattices of rank exceeding 3}

As mentioned in the introduction, regular diagonal quaternary $\z$-lattices were classified by B.M. Kim \cite{BMK}, but the paper was never published.
Furthermore, the list in \cite{BMK} needs to be corrected a bit.
For this reason, we provide a list of all regular diagonal quaternary $\z$-lattices for the convenience of the readers, which is as follows.


\begin{prop} \label{proprk4}
A diagonal quaternary $\z$-lattice $\langle a_1,a_2,a_3,a_4\rangle$ with $a_1\le a_2\le a_3\le a_4$ is regular if and only if it appears in Table \ref{tablerk4}.
\end{prop}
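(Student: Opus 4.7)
The plan is to prove the two directions separately. For the \emph{sufficiency} (``if'') direction, one must verify that each diagonal quaternary $\z$-lattice listed in Table~\ref{tablerk4} is indeed regular. Since the overwhelming majority of the entries have class number one in their genus, regularity follows at once from $L_p$-representability for all $p$ together with the local-global principle for the genus. For the remaining exceptional entries, regularity is established by a standard argument using Watson transformations (cf.\ Lemma~\ref{lemlambda}) to reduce to a lattice of lower discriminant whose regularity is already known, or by an explicit representation search case-by-case on the square-class of the target integer.

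For the \emph{necessity} (``only if'') direction, suppose $L=\langle a_1,a_2,a_3,a_4\rangle$ is regular with $a_1\le a_2\le a_3\le a_4$. By Lemma~\ref{lemternary} the ternary section $J=\langle a_1,a_2,a_3\rangle$ must be one of the $103$ lattices enumerated in Table~\ref{tableu3}. Fixing such a $J$, the next step is to bound $a_4$. By Lemma~\ref{lemunstable}, $L$ is automatically $p$-stable for every prime $p>5$, and Lemma~\ref{lemstabler} then shows that local representability of $L$ can fail only at primes in $\{2,3,5\}$ (together with $p=7$ in the exceptional case $J=\langle 3,3,7\rangle$). One then selects a triple $(\epsilon_2,\epsilon_3,\epsilon_5)\in A$ adapted to the square-class structure of $L$, refines $S(\epsilon_2,\epsilon_3,\epsilon_5)$ by any remaining local conditions, and obtains a set $S$ of integers locally represented by $L$. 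Lemma~\ref{lembound} then yields the explicit upper bound $a_4\le\psi(S,J)$.

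This reduces the task to a finite case analysis: for each $J$ in Table~\ref{tableu3} and each candidate $a_4\in[a_3,\psi(S,J)]$, decide whether $\langle J,a_4\rangle$ is regular. For the six irregular ternary sections $J$ marked with ${}^{\dag}$, the summand $a_4$ is forced to cure the local-global defect recorded in Table~\ref{tableirr}, which severely restricts the allowed values. For the ternary $J(84)=\langle 3,3,7\rangle$ marked with ${}^{\dag\dag}$, the entry $u(J)$ in Table~\ref{tablereg7} controls the extra local condition at $7$ that $a_4$ must accommodate. Within each $J$, once the candidate $a_4$'s are listed, one either confirms regularity via a class-number-one check on $\gen(L)$ (or via a theta series computation over $\gen(L)$), or disqualifies $L$ by exhibiting an explicit element of $Q(\gen(L))\setminus Q(L)$. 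The infinite families that appear in Table~\ref{tablerk4} are recognized and described uniformly as those generated from minimal regular members by redundant insertions in the sense of Lemmas~\ref{lemredodd} and~\ref{lemred2}, the parameter of the family being the $2$-adic or $p$-adic scale of $a_4$.

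The chief obstacle is the sheer size of the case analysis: $103$ ternary sections, each with its own bound $\psi(S,J)$ and its own finite list of admissible $a_4$. Within each case one must then cleanly separate sporadic regular quaternaries from the infinite redundant families, and for quaternaries whose genus has more than one class, one must either produce an explicit local-but-not-global integer to eliminate $L$, or verify directly that every class in the genus represents the same set of integers. Keeping this bookkeeping consistent across all cases is the main source of technical difficulty.
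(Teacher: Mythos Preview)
Your necessity direction is sound and aligns with the paper's framework (Lemmas~\ref{lemunstable}, \ref{lembound}, \ref{lemternary}); the paper itself defers the full case analysis to \cite{BMK}. The sufficiency direction, however, rests on two claims that fail. First, it is not true that the bulk of Table~\ref{tablerk4} has class number one: the table consists largely of infinite one-parameter families such as $\langle 1,1,1,2^{r-1}\rangle$ or $\langle 1,2,5,2\cdot 5^r\rangle$, and as the discriminant grows the class number does not stay equal to one. Second, Lemma~\ref{lemlambda} cannot be used as you propose, since it runs the wrong way: it deduces regularity of $\lambda_m(L)$ \emph{from} regularity of $L$, not conversely, so it cannot establish regularity of a high-discriminant member of a family from a low-discriminant one. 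Relatedly, your description of the infinite families as ``generated by redundant insertions'' via Lemmas~\ref{lemredodd} and~\ref{lemred2} is a misreading: those lemmas concern adding a fifth summand without changing $Q(L)$, whereas distinct quaternaries $\langle a_1,a_2,a_3,a_4\rangle$ and $\langle a_1,a_2,a_3,a_4'\rangle$ in a family of Table~\ref{tablerk4} have genuinely different $Q(\gen(\cdot))$ and must each be proved regular.

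The method the paper actually uses---illustrated immediately after the proposition for $\langle 2,3,6,a_4\rangle$ and attributed to \cite{BMK} as the ``routine method''---is your third alternative, but promoted to the primary role. One exploits that the \emph{ternary} section $J=\langle a_1,a_2,a_3\rangle$ is regular (indeed of class number one in the relevant cases), so $Q(J)$ is an explicit union of arithmetic progressions. One then computes $Q(\gen(\langle a_1,a_2,a_3,a_4\rangle))\setminus Q(J)$, which is again a finite union of progressions depending on the parameter $r$ through $\ord_p(a_4)$, and verifies uniformly in $r$ that every such $n$ satisfies $n-a_4x^2\in Q(J)$ for a suitable $x$. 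This is what makes the infinite families tractable without any class-number hypothesis on the quaternary lattice.
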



In Table \ref{tablerk4}, for example, a diagonal quaternary $\z$-lattice $\langle 1,1,1,a_4\rangle$ is regular if and only if
$$
a_4\in \{ 2^{r-1} : r\in \n \} \cup \{ 2^{2r-1}3 : r\in \n \} \cup \{3,5,7\}.
$$

\begin{longtable}[h]{|ccc|ll|}
\hline
$a_1$ & $a_2$ & $a_3$ & $a_4$ & \\
\hline
1&1&1& $2^{r-1}, 2^{2r-1} 3$, & 3,5,7\\
\hline
1&1&2& $2^r, 2^{2r-2} 3$, & 5,6,7,9,10,11,13,14\\
\hline
1&1&3& $3^r, 2^1 3^r, 2^2 3^{2r-2}, 3^{2r-2} 5$ & \\
\hline
1&1&4& $2^{r+1}, 2^{2r+1} 3$, & 12,20,28\\
\hline
1&1&5& $2^{2r+1}$ & \\
\hline
1&1&6& $3^{r+1}, 2^1 3^{2r}$ & \\
\hline
1&1&8& $2^{r+2}, 2^{2r+2} 3$, & 24,40,56\\  
\hline
1&1&12& $2^2 3^{2r}$ & \\
\hline
1&1&16& $2^{r+3}, 2^{2r+3} 3$, & 48,80,112\\
\hline
1&1&24& $2^3 3^{2r}$ & \\
\hline
1&2&2& $2^r, 2^{2r-1} 3$, & 3,5,7\\
\hline
1&2&3& $2^{r+1}$, & 3,5,6,7,9,10\\
\hline
1&2&4& $2^{r+1}, 2^{2r} 3$, & 5,6,7,9,10,11,13,14\\
\hline
\multirow{2}{*}{1}&\multirow{2}{*}{2}&\multirow{2}{*}{5}& $5^{2r}, 2^1 5^r, 2^2 5^{2r}, 2^3 5^{2r-2}, 3^1 5^{2r}$, & \\ &&& $3^2 5^{2r-2}, 5^{2r-2} 7, 2^1 3^1 5^{2r-2}$ & \\
\hline
1&2&6& $2^{r+2}$ & \\
\hline
1&2&8& $2^{r+2}, 2^{2r+1} 3$ & \\
\hline
1&2&16& $2^{r+3}, 2^{2r+2} 3$ & \\
\hline
1&2&32& $2^{r+4}, 2^{2r+3} 3$ & \\
\hline
1&3&3& $3^r, 2^1 3^r, 2^2 3^{2r-1}, 3^{2r-1} 5$ & \\
\hline
1&3&4& $2^2 3^{r-1}, 2^3 3^{2r-2}$ & \\
\hline
1&3&6& $2^r 3$, & 9,15,18,21,27,30\\
\hline
1&3&9& $3^{r+1}, 2^1 3^{r+1}, 2^2 3^{2r}, 3^{2r} 5$ & \\
\hline
1&3&12& $2^2 3^r, 2^3 3^{2r-1}$ & \\
\hline
1&3&36& $2^2 3^{r+1}, 2^3 3^{2r}$ & \\
\hline
1&4&4& $2^{r+1}, 2^{2r+1} 3$, & 12,20,28\\
\hline
1&4&8& $2^{r+2}, 2^{2r} 3$, & 20,24,28,36,40,44,52,56\\
\hline
1&4&12& $2^2 3^r, 2^3 3^r, 2^4 3^{2r-2}, 2^2 3^{2r-2} 5$ & \\
\hline
1&4&16& $2^{r+3}, 2^{2r+3} 3$, &48,80,112\\
\hline
$\dag$1&4&20& & 32 \\
\hline
1&4&24& $2^2 3^{r+1}, 2^3 3^{2r}$ & \\
\hline
1&5&5& $2^{2r-1} 5$, & 5,15\\
\hline
1&5&8& $2^3 5^{r-1}, 2^4 5^{2r-2}, 2^5 5^{2r-2}, 2^3 3^1 5^{2r-2}$ & \\
\hline
\multirow{2}{*}{1}&\multirow{2}{*}{5}&\multirow{2}{*}{10}& $5^{2r+1}, 2^1 5^r, 2^2 5^{2r-1}, 2^3 5^{2r-1}$, & \\ &&& $3^1 5^{2r-1}, 3^2 5^{2r-1}, 5^{2r-1} 7, 2^1 3^1 5^{2r-1}$ & \\
\hline
1&5&40& $2^3 5^r, 2^4 5^{2r-1}, 2^5 5^{2r-1}, 2^3 3^1 5^{2r-1}$ & \\
\hline
1&6&9& $3^{r+1}, 2^1 3^{2r}$ & \\
\hline
1&8&8& $2^{r+2}, 2^{2r+1} 3$ & \\
\hline
1&8&16& $2^{r+3}, 2^{2r+2} 3$, & 24,40,56\\
\hline
1&8&24& $2^{r+4}$ & \\
\hline
1&8&32& $2^{r+4}, 2^{2r+3} 3$ & \\
\hline
1&8&64& $2^{r+5}, 2^{2r+4} 3$ & \\
\hline
1&9&12& $2^2 3^{2r}$ & \\
\hline
1&9&24& $2^3 3^{2r}$ & \\
\hline
1&12&12& $2^2 3^r, 2^3 3^{2r-1}$ & \\
\hline
$\dag$1&12&24& & 24,36,48,60\\
\hline
1&12&36& $2^2 3^{r+1}, 2^3 3^{r+1}, 2^4 3^{2s}, 2^2 3^{2s} 5$ & \\
\hline
1&16&16& $2^{r+3}, 2^{2r+3} 3$, & 48,80,112\\
\hline
\multirow{2}{*}{$\dag$1}&\multirow{2}{*}{16}&\multirow{2}{*}{32}& & 32,48,64,80,96,\\ &&&& 112,128,144,160\\
\hline
1&16&48& $2^4 3^{2r}, 2^5 3^{2r}$ & \\
\hline
1&48&144& $2^4 3^{2r}, 2^5 3^{2r}$ & \\
\hline
2&3&3& $3^r, 2^1 3^{2r-1}$ & \\
\hline
2&3&6& $2^r 3$, & 9,15\\
\hline
2&3&9& & 9,18\\
\hline
3&3&4& $2^2 3^{2r-1}$ & \\
\hline
3&3&8& $2^3 3^{2r-1}$ & \\
\hline
3&4&4& $2^2 3^{r-1}, 2^3 3^{2r-2}$ & \\
\hline
$\dag$3&4&8& & 8,12,16,20\\
\hline
3&4&12& $2^2 3^r, 2^3 3^r, 2^4 3^{2r-1}, 2^2 3^{2r-1} 5$ & \\
\hline
3&4&36& $2^2 3^{r+1}, 2^3 3^{2r}$ & \\
\hline
3&8&12& $2^2 3^r, 2^3 3^{2r-1}$ & \\
\hline
3&8&24& $2^{r+2} 3$ & \\
\hline
3&16&48& $2^4 3^{2r-1}, 2^5 3^{2r-1}$ & \\
\hline
\caption{{\small Regular diagonal quaternary $\z$-lattices $\langle a_1,a_2,a_3,a_4\rangle$ ($r\in \n$)}}
\label{tablerk4}
\end{longtable}


As shown in \cite{BMK}, the regularity of any of the $\z$-lattices in Table \ref{tablerk4} can be shown by a routine method.
Even so, the $\z$-lattices
$$
\langle 2,3,6,a_4\rangle, \quad a_4\in \{9,15\} \cup \{2^{2r}3 : r\in \n \}
$$
do not appear in \cite{BMK} as mentioned before, we show their regularities as a sample.

\begin{prop}
For any positive integer $a_4\in \{9,15\} \cup \{2^{2r}3 : r\in \n \}$, the diagonal quaternary $\z$-lattice $\langle 2,3,6,a_4\rangle$ is regular.
\end{prop}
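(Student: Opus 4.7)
The plan is to reduce regularity of each $L=\langle 2,3,6,a_4\rangle$ in the statement to that of the regular ternary sublattice $\langle 2,3,6\rangle$, which is entry $J(72)$ of Table \ref{tableu3}. A preliminary local computation shows that $Q(\langle 2,3,6\rangle_p)=Q(L_p)$ for every odd prime $p$. Indeed, at every prime $p\ge 5$ the ternary $\langle 2,3,6\rangle_p$ already equals $\z_p$ since its discriminant $36$ is a square unit in $\z_p$; at $p=3$ one checks that $Q(\langle 2,3,6\rangle_3)\supseteq 3\z_3$ and that $Q(\langle a_4\rangle_3)\subseteq 3\z_3$ for each $a_4$ in the statement, so again $Q(L_3)=Q(\langle 2,3,6\rangle_3)$. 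Hence the genera of $L$ and of $\langle 2,3,6\rangle$ can disagree only at the prime $2$.

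Given $n\in Q(\gen(L))$, either $n\in Q(\gen(\langle 2,3,6\rangle))$, in which case regularity of $\langle 2,3,6\rangle$ already supplies a representation of $n$ by the ternary sublattice, or else the obstruction is purely $2$-adic. In the latter case I would produce a nonnegative integer $w$ with
$$
n-a_4w^2\in Q(\gen(\langle 2,3,6\rangle));
$$
because the only local condition to be repaired is the $2$-adic one, this amounts to choosing $w$ so that $n-a_4w^2$ is $2$-adically represented by $\langle 2,3,6\rangle$. For $a_4\in\{9,15\}$ this is a finite case analysis on the residue of $n$ modulo a small power of $2$, and in each class one exhibits an explicit $w\in\{1,2,3\}$ whose contribution $a_4w^2$ repairs the $2$-adic defect; the result is then confirmed by invoking the regularity of $\langle 2,3,6\rangle$ on $n-a_4w^2$.

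For the infinite family $a_4=2^{2r}3$ I would proceed by induction on $r$. The base case $r=1$ is $\langle 2,3,6,12\rangle$, which appears on the established list of regular quaternary forms. For $r\ge 2$ and $n\in Q(\gen(L_r))$ with $L_r:=\langle 2,3,6,2^{2r}3\rangle$, I would use the $2$-adic Jordan splitting
$$
L_{r,2}\simeq \langle 3\rangle \perp 2\langle 1,3\rangle \perp 2^{2r}\langle 3\rangle
$$
to argue that $n\notin Q(\langle 2,3,6\rangle_2)$ forces $4\mid n$ (since the norm of the last Jordan component has valuation $\ge 2r\ge 4$) together with $n/4\in Q(L_{r-1,2})$. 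Combined with the equality of local representations at odd primes established above, this gives $n/4\in Q(\gen(L_{r-1}))$, so by the induction hypothesis there exist $x,y,z,w\in\z$ with $n/4=2x^2+3y^2+6z^2+2^{2(r-1)}3w^2$. Multiplication by $4$ then yields
$$
n=2(2x)^2+3(2y)^2+6(2z)^2+2^{2r}3w^2\in Q(L_r).
$$

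The main obstacle is the $2$-adic bookkeeping: verifying the implication ``$n\in Q(L_{r,2})\setminus Q(\langle 2,3,6\rangle_2)\Longrightarrow 4\mid n$ and $n/4\in Q(L_{r-1,2})$'' for $r\ge 2$ requires a careful Jordan-decomposition argument at $p=2$, and the finite check in the $a_4\in\{9,15\}$ cases is of the same flavor although simpler. Once this bookkeeping is carried out, all remaining steps are routine applications of the local theory of \cite{OM2} together with the known regularity of the ternary $\langle 2,3,6\rangle=J(72)$.
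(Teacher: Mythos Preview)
Your strategy is sound and would succeed once the deferred $2$-adic computations are carried out, but it differs from the paper's argument. The paper does not use induction on $r$; it treats all values of $a_4$ uniformly by first writing down explicitly (via the class-number-one property of $\langle 2,3,6\rangle$) the set
$$
Q(\gen(\langle 2,3,6,a_4\rangle))\setminus Q(\langle 2,3,6\rangle)=\{2^{2s}\cdot 2^{2t}(24u+v):s,u\ge 0,\ v\in\{15,23\}\},
$$
where $2t=\max(\ord_2(a_4)-2,0)$, and then showing directly that $2^{2t}(24u+v)-a_4w^2\in Q(\langle 2,3,6\rangle)$ for a single explicit $w$ (namely $w=1$ for $a_4\in\{9\}\cup\{2^{2r}3\}$ and $w=2$ for $a_4=15$, with a direct check of the four values $n\in\{15,23,39,47\}$ in the latter case). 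Your inductive descent is valid, but verifying its key implication $n\in Q(L_{r,2})\setminus Q(\langle 2,3,6\rangle_2)\Rightarrow 4\mid n$ and $n/4\in Q(L_{r-1,2})$ amounts to the same $2$-adic Jordan analysis the paper uses to identify the difference set above, so the induction buys organizational structure rather than a genuine shortcut. One caution: invoking the base case $\langle 2,3,6,12\rangle$ as ``on the established list'' is circular in this paper's context, since the proposition at hand is precisely where the regularity of the $a_4=2^{2r}3$ family (including $r=1$) is being supplied; you should prove $r=1$ directly by the same subtraction method, which is no harder than the cases $a_4\in\{9,15\}$ you already sketch.
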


\begin{proof}
Let $a_4$ be any integer in the set $\{9,15\} \cup \{2^{2r}3 : r\in \n \}$ and let $t$ be the nonnegative integer satisfying
$$
2t=\max(\ord_2(a_4)-2,0).
$$
Note that the class number of $\langle 2,3,6\rangle$ is one and
\begin{equation} \label{eq236}
\n-Q(\langle 2,3,6\rangle)=\{ 2^{2s}(8u+7) : s,u\in \n \cup \{0\} \} \cup \{ 3v+1 : v\in \n \cup \{0\} \}.
\end{equation}
One may easily check that
$$
Q(\gen(\langle 2,3,6,a_4\rangle)-Q(\langle 2,3,6\rangle)=\{2^{2s}\cdot 2^{2t}(24u+v) : s,u\in \n \cup \{0\}, v\in \{15,23\} \}.
$$
Hence, it suffices to show that
$$
2^{2t}(24u+v)\ra \langle 2,3,6,a_4\rangle
$$
for any nonnegative integer $u$ and $v\in \{15,23\}$.
If $a_4\in \{9\} \cup \{2^{2r}3 : r\in \n \}$, then one may easily see that
$$
2^{2t}(24u+v)-a_4\ra \langle 2,3,6\rangle
$$
by using Equation \eqref{eq236}.
From this, it follows that $2^{2t}(24u+v)$ is represented by $\langle 2,3,6,a_4\rangle$.
If $a_4=15$, then we have $t=0$ by definition and one may see that
$$
24u+v-15\cdot 2^2\ra \langle 2,3,6\rangle,
$$
for $u\ge 2$.
One may directly check that both $v$ and $24+v$ are represented by $\langle 2,3,6,15\rangle$.
This completes the proof.
\end{proof}


We are ready to classify all regular diagonal $\z$-lattices of rank exceeding 3.
As shown in Lemma \ref{lemternary}, there are 103 possibilities for the ternary section of such a $\z$-lattice.
We will find all minimal regular diagonal $\z$-lattices $L=\langle a_1,a_2,a_3,\dots,a_k\rangle$ having $J(j)=\langle a_1,a_2,a_3\rangle$ as its ternary section for each $j=1,2,\dots,103$.
We divide 103 $j$'s into six batches as in Table \ref{tablebatch}.


\begin{table}[ht]
\caption{Batches for 103 candidates for the ternary section of $L$}
\begin{tabular}{|c|c|c|}
\hline
Batch & $j$ & $\#$ of $j$'s\\
\hline
\multirow{2}{*}
{Batch 1} & 1,2,3,4,5,6,7,10,12,13,14,16,17,19,20, & \multirow{2}{*}{28}\\
& 21,24,30,32,34,44,48,49,50,51,53,61,71 & \\
\hline
\multirow{2}{*}{Batch 2}&9,11,22,23,26,29,33,36,38,39,42,55,56, &\multirow{2}{*}{27}\\
&58,60,64,69,72,74,83,85,86,88,89,91,92,96 & \\
\hline
Batch 3 & 35,59,63,87 & 4\\
\hline
Batch 4 & 15,40 & 2\\
\hline
\multirow{2}{*}{Batch $2'$} & 8,18,25,27,28,31,37,41,43,45,46,47,52,54,57,62,66,67,68,70, & \multirow{2}{*}{39}\\
& 73,75,76,77,78,79,80,81,82,90,93,94,95,97,99,100,101,102,103 & \\
\hline
Batch $3'$ & 65,84,98 & 3\\
\hline
\end{tabular}
\label{tablebatch}
\end{table}


Since all cases in a batch can be dealt with in a similar manner, we look into one representative case for each batches. We let $L=\langle a_1,a_2,a_3,a_4,\dots,a_k\rangle$ $(a_1\le a_2\le \cdots \le a_k$)
be a minimal regular diagonal $\z$-lattice of rank $k\ge 4$ having $J(j)=\langle a_1,a_2,a_3\rangle$ as its ternary section.

\noindent ({\bf Batch 1}) $j=1$ and $L=\langle 1,1,1,a_4,a_5,\dots,a_k\rangle$.
Assume first that $\ord_2(a_4)\le 2$.
Then one may easily check that
$$
\n \cup \{0\}=Q(\gen(\langle 1,1,1,a_4\rangle))\subseteq Q(\gen(L))=Q(L).
$$
Since the positive integer 7 is not represented by $\langle 1,1,1\rangle$, we have $1\le a_4\le 7$.
Note that $\langle 1,1,1,a_4\rangle$ is universal for every $a_4=1,2,\dots,7$.
Thus we have $k\le 4$ since otherwise we would have $a_5$ redundant and this contradicts to the minimality of $L$.
Next, assume that $\ord_2(a_4)\in \{2s-1,2s\}$ for some integer $s\ge 2$.
One may easily show that $7\cdot 2^{2s-2}$ is locally represented by $\langle 1,1,1,a_4\rangle$.
Thus we have
$$
7\cdot 2^{2s-2}\in Q(\gen(\langle 1,1,1,a_4\rangle))\subseteq Q(\gen(L))=Q(L).
$$
Since $7\cdot 2^{2s-2}\nra \langle 1,1,1\rangle$, we have $a_4\le 7\cdot 2^{2s-2}$.
This implies that
$$
a_4\in \{2^{2s-1},2^{2s},3\cdot 2^{2s-1}\}.
$$
By Proposition \ref{proprk4}, the diagonal quaternary $\z$-lattice $\langle 1,1,1,a_4\rangle$ is regular.
Now suppose that $k\ge 5$.
If $\ord_2(a_5)\ge 2s-1$, then one may easily check that $a_5$ is redundant by using Lemmas \ref{lemredodd} and \ref{lemred2}.
Thus we may further assume that $\ord_2(a_5)\le 2s-2$.
Using a similar argument with the case of $a_4$, one may easily show that
$$
\begin{cases}
1\le a_5\le 7&\text{if}\ \ \ord_2(a_5)\le 2,\\
a_5\in \{2^{2t-1},2^{2t},3\cdot 2^{2t-1}\}&\text{if}\ \ \ord_2(a_5)\in \{2t-1,2t\},\ \ 2\le t\le s-1.\end{cases}
$$
Then it follows that $\langle 1,1,1,a_5\rangle$ is regular and $a_4$ is redundant.
This is absurd, and we have $k\le 4$.

In summary, if $L=\langle 1,1,1,a_4,a_5,\dots,a_k\rangle$ is a minimal regular diagonal $\z$-lattice of rank $k\ge 4$, then we have
$$
k=4\ \ \text{and}\ \ a_4\in \{ 2^{r-1} : r\in \n\} \cup \{2^{2r-1}3 : r\in \n \} \cup \{3,5,7\}.
$$

\noindent ({\bf Batch 2}) $j=9$ and $L=\langle 1,1,12,a_4,a_5,\dots,a_k\rangle$.
Note that $\ord_2(a_i)\ge 2$ for every $i=4,5,\dots,k$, since otherwise we have
$$
3\in Q(\gen(\langle 1,1,12,a_i\rangle))\subseteq Q(\gen(L)),
$$
and this is absurd because 3 cannot be represented by $L$.
One may easily show that $\ord_3(a_i)\ge 2$ for every $i=4,5,\dots,k$, in a similar manner.
Now, we let $\ord_3(a_4)\in \{2s,2s+1\}$ for some positive integer $s$.
Then one may easily check that
$$
2\cdot 3^{2s+1}\in Q(\gen(\langle 1,1,12,a_i\rangle))\subseteq Q(\gen(L))=Q(L).
$$
Since $2\cdot 3^{2s+1}\nra \langle 1,1,12\rangle$, we have $a_4\le 2\cdot 3^{2s+1}$.
From these, one may easily deduce that $a_4=4\cdot 3^{2s}$.
By Proposition \ref{proprk4}, $\langle 1,1,12,a_4\rangle$ is regular.
Suppose that $k\ge 5$.
Since $\ord_3(a_5)$ must be greater than or equal to 2, we have $\ord_3(a_5)\in \{2t,2t+1\}$ for some positive integer $t$.
If $s\le t$, then $a_5$ is redundant.
If $s>t$, then one may easily deduce that $a_5=4\cdot 3^{2t}$ as above, and this implies that $\langle 1,1,12,a_5\rangle$ is regular and $a_4$ is redundant.

In summary,  if $L=\langle 1,1,12,a_4,a_5,\dots,a_k\rangle$ is a minimal regular diagonal $\z$-lattice of rank $k\ge 4$, then we have
$$
k=4\ \ \text{and}\ \ a_4\in \{ 2^23^{2r} : r\in \n \}.
$$

\noindent ({\bf Batch 3}) $j=35$ and $L=\langle 1,4,20,a_4,a_5,\dots,a_k\rangle$.
As you may see in Table \ref{tableirr}, one may directly check that
$$
\psi(Q(\gen(\langle 1,4,20\rangle)),\langle 1,4,20\rangle)=77.
$$
Thus we have $a_4\le 77$.
One may easily check that
$$
\psi(Q(\gen(\langle 1,4,20,u\rangle)),\langle 1,4,20,u\rangle)<u\ \ \text{for every}\ \ u\in \{20,21,22,\dots,77\}-\{32\}.
$$
By Lemma \ref{lembound}, the only possibility is that $a_4=32$.
Then $\langle 1,4,20,a_4\rangle$ is regular by Proposition \ref{proprk4}.
Suppose that $k\ge 5$.
If $\ord_2(a_5)\ge 5$, then one may easily show that $a_5$ is redundant.
If $\ord_2(a_5)\le 4$, then one may easily check that
$$
12\in Q(\gen(\langle 1,4,20,32,a_5\rangle))\subseteq Q(\gen(L)),
$$
and this is absurd since 12 cannot be represented by $L$.

In summary,  if $L=\langle 1,4,20,a_4,a_5,\dots,a_k\rangle$ is a minimal regular diagonal $\z$-lattice of rank $k\ge 4$, then we have
$$
k=4\ \ \text{and}\ \ a_4=32.
$$

\noindent ({\bf Batch 4}) $j=15$ and $L=\langle 1,2,5,a_4,a_5,\dots,a_k\rangle$.
Let $\ord_5(a_4)\in \{2s,2s+1\}$ for some $s\ge 0$.
Then one may easily see that
$$
10\cdot 5^{2s}=2\cdot 5^{2s+1}\in Q(\gen(\langle 1,2,5,a_4\rangle))\subseteq Q(\gen(L))=Q(L).
$$
Since $10\cdot 5^{2s}\nra \langle 1,2,5\rangle$, we have
$$
a_4\in \{5^{2s},2\cdot 5^{2s},3\cdot 5^{2s},\dots, 10\cdot 5^{2s}\}.
$$

First, assume that $a_4\in \{6,7,8,9,10\}$.
Then since $\langle 1,2,5,a_4\rangle$ is universal, we have $k\le 4$ by the minimality of $L$.

Second, assume that
$$
a_4\in \{5^{2s},2\cdot 5^{2s},3\cdot 5^{2s},4\cdot 5^{2s},6\cdot 5^{2s},7\cdot 5^{2s},8\cdot 5^{2s},9\cdot 5^{2s},10\cdot 5^{2s}\},\ \ s\in \n.
$$
Then $\langle 1,2,5,a_4\rangle$ is regular by Proposition \ref{proprk4}.
Suppose that $k\ge 5$.
If $\ord_5(a_5)\le 1$, then one may easily check that
$$
10\in Q(\gen(\langle 1,2,5,a_5\rangle))\subseteq Q(\gen(L)).
$$
However, $L$ cannot represent 10 since $10\nra \langle 1,2,5\rangle$ and $10<a_4$.
This is absurd and thus we have $\ord_5(a_5)\in \{2t,2t+1\}$ for some $t\in \n$.
If $t\ge s$, then one may easily show that $a_5$ is redundant.
If $t<s$, then one may easily deduce that $2\cdot 5^{2t+1}$ is locally represented by $L$ but not globally by $L$.
Hence $k\le 4$ in the second case.

Last, assume that $a_4=5^{2s+1}$ for some nonnegative integer $s$.
Since the ternary $\z$-lattice $\langle 1,2,5\rangle$ is regular, every nonnegative integer not of the form $5^{2u+1}(5v\pm 2)$ for some nonnegative integers $u$ and $v$ is represented by $\langle 1,2,5\rangle$.
Using this fact, one may easily check that
$$
Q(\gen(\langle 1,2,5,5^{2s+1}\rangle))-Q(\langle 1,2,5,5^{2s+1}\rangle)=\{3\cdot 5^{2s+1}\}.
$$
Thus we have $a_5\le 3\cdot 5^{2s+1}$.
Suppose that $\ord_5(a_5)\le 2s-1$.
Then one may easily check that
$$
2\cdot 5^{2s-1}\in Q(\gen(\langle 1,2,5,a_5\rangle))\subseteq Q(\gen(L)).
$$
However, $2\cdot 5^{2s-1}$ cannot be represented by $L$ since it is not represented by $\langle 1,2,5\rangle$ and $2\cdot 5^{2s-1}<a_4$.
This is absurd and thus we have $\ord_5(a_5)\ge 2s$.
Since
$$
2s\le \ord_5(a_5)\ \ \text{and}\ \ a_4=5^{2s+1}\le a_5\le 3\cdot 5^{2s+1},
$$
we have $a_5\in \{5^{2s+1},11\cdot 5^{2s},12\cdot 5^{2s},13\cdot 5^{2s},14\cdot 5^{2s},15\cdot 5^{2s} \}$.
From this, it follows that
$$
3\cdot 5^{2s+1}-a_5\ra \langle 1,2,5,5^{2s+1}\rangle.
$$
Hence we have
\begin{align*}
Q(\gen(\langle 1,2,5,5^{2s+1}\rangle))&=Q(\langle 1,2,5,5^{2s+1}\rangle) \cup \{3\cdot 5^{2s+1}\} \\
&\subset Q(\langle 1,2,5,5^{2s+1},a_5\rangle).
\end{align*}
Since $2s\le \ord_5(a_5)$, we have
\begin{align*}
Q(\gen(\langle 1,2,5,5^{2s+1},a_5\rangle))&=Q(\gen(\langle 1,2,5,5^{2s+1}\rangle))\\
&\subset Q(\langle 1,2,5,5^{2s+1},a_5\rangle).
\end{align*}
Thus $\langle 1,2,5,5^{2s+1},a_5\rangle$ is regular.
Suppose that $k\ge 6$.
If $\ord_5(a_6)\ge 2s$, then one may easily deduce that $a_6$ is redundant.
If $\ord_5(a_6)\le 2s-1$, then one may easily show that
$$
2\cdot 5^{2s-1}\in Q(\gen(\langle 1,2,5,a_6\rangle))\subseteq Q(\gen(L)),
$$
but $2\cdot 5^{2s-1}$ cannot be represented by $L$.
Hence, we have $k\le 5$.

In summary, if $L=\langle 1,2,5,a_4,a_5,\dots,a_k\rangle$ is a minimal regular diagonal $\z$-lattice of rank $k\ge 4$, then we have
$$
k=4\ \ \text{and}\ \ a_4\in \{6,7,8,9,10\} \cup \{l\cdot 5^{2r} : l\in \{1,2,3,4,6,7,8,9,10\},\ r\in \n \},
$$
or
$$
k=5\ \ \text{and}\ \ (a_4,a_5)\in \{(5^{2r+1},l\cdot 5^{2r}) : l\in \{ 5,11,12,13,14,15\},\ r\in \n \}.
$$

\noindent ({\bf Batch} ${\bf 2'}$) $j=8$ and $L=\langle 1,1,9,a_4,a_5,\dots,a_k\rangle$.
Assume first that $\ord_2(a_4)\le 2$.
Then one may easily show that
$$
7\in Q(\gen(\langle 1,1,9,a_4\rangle))\subseteq Q(\gen(L)).
$$
This is absurd since 7 cannot be represented by $L$.
So we have $\ord_2(a_4)\ge 3$ and thus $\ord_2(a_4)\in \{2s-1,2s\}$ for some $r\ge 2$.
Then one may easily check that
$$
7\cdot 2^{2s-2}\in Q(\gen(\langle 1,1,9,a_4\rangle))\subseteq Q(\gen(L))=Q(L).
$$
Since $7\cdot 2^{2s-2}\nra \langle 1,1,9\rangle$, we have $a_4\le 7\cdot 2^{2s-2}$, and thus $a_4\in \{2^{2s-1},2^{2s},3\cdot 2^{2s-1}\}$.
Then one may easily check that
$$
3\in Q(\gen(\langle 1,1,9,a_4\rangle))\subseteq Q(\gen(L)),
$$
which is absurd since 3 cannot be represented by $L$.

In summary, there is no minimal regular diagonal $\z$-lattice $\langle a_1,a_2,\dots,a_k\rangle$ of rank $k\ge 4$ with $a_1\le a_2\le \cdots \le a_k$ such that $(a_1,a_2,a_3)=(1,1,9)$.

\noindent ({\bf Batch} ${\bf 3'}$) $j=65$ and $L=\langle 1,16,144,a_4,a_5,\dots,a_k\rangle$.
We first assert that
\begin{equation} \label{eq116144}
a_i\equiv 0\Mod {2^7 3^2},\ \ i=4,5,\dots,k.
\end{equation}
Let $i\in \{4,5,\dots,k\}$.
Suppose that $\ord_2(a_i)\le 6$.
Then one may easily check that
$$
112\in Q(\gen(\langle 1,16,144,a_i))\subseteq Q(\gen(L)).
$$
This is absurd since 112 cannot be represented by $L$.
Suppose that $\ord_3(a_4)\le 1$.
Then at least one of 3 and 6 is represented by $\langle 1,16,144,a_i\rangle$ over $\z_3$.
Using this, one may easily check that 48 or 33 is locally represented by $\langle 1,16,144,a_i\rangle$.
This is absurd since any of the two integers 48 and 33 cannot be represented by $L$.
Hence we showed \eqref{eq116144}, and in particular, $a_4\ge 2^7 3^2=1152$.
However, from Lemma \ref{lembound} and
$$
\psi(Q(\gen(\langle 1,16,144\rangle)),\langle 1,16,144\rangle)=473,
$$
it follows that $a_4\le 473$, which is absurd.

In summary, there is no minimal regular diagonal $\z$-lattice $\langle a_1,a_2,\dots,a_k\rangle$ of rank $k\ge 4$ with $a_1\le a_2\le \cdots \le a_k$ such that $(a_1,a_2,a_3)=(1,16,144)$.

\vskip 10pt
We summarize the discussion as follow.
If $j$ is in one of Batches 1-3, then $\rk (L)=4$ and $L$ appears in Table \ref{tablerk4}.
If $j$ is in Batch 4, then $\rk (L)\in \{4,5\}$ and $L$ appears in Table \ref{tablerk4} or Table \ref{tablerk5}.
For the values $j$ in Batch $2'$ and $3'$, such an $L$ cannot exist.
Hence we have 


\begin{thm}
Let $L=\langle a_1,a_2,\dots,a_k\rangle$ be a diagonal $\z$-lattice of rank $k\ge 4$ such that $a_1\le a_2\le \cdots \le a_k$.
Then $L$ is minimal regular if and only if $k\in \{4,5\}$ and it appears in Table \ref{tablerk4} or Table \ref{tablerk5}.
\end{thm}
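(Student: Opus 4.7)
The plan is to reduce the classification to the finite enumeration already constructed in the paper and then carry out a uniform batch-by-batch analysis. By Lemma~\ref{lemternary}, any minimal regular diagonal $\z$-lattice $L=\langle a_1,\dots,a_k\rangle$ of rank $k\ge 4$ has ternary section $\langle a_1,a_2,a_3\rangle$ equal to one of the $103$ lattices $J(1),\dots,J(103)$ in Table~\ref{tableu3}. So it suffices to fix $j\in\{1,\dots,103\}$, assume $\langle a_1,a_2,a_3\rangle=J(j)$, and classify all tails $(a_4,\dots,a_k)$ for which $L$ is minimal regular. The $103$ values are grouped into the six batches of Table~\ref{tablebatch}, and one representative from each batch is treated in detail in the analysis preceding the theorem; the remaining members of each batch follow by the same template.

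The template for a fixed $J(j)$ runs as follows. First, exhibit local obstructions: for each prime $q$ dividing $2\cdot dJ(j)$, produce an integer $n_q$ (or a parametrized family, depending on $\mathrm{ord}_q(a_4)$) lying in $Q(\gen(\langle a_1,a_2,a_3,a_4\rangle))$ but not in $Q(\langle a_1,a_2,a_3\rangle)$. Regularity of $L$ forces $n_q\in Q(L)$, and Lemma~\ref{lembound} then bounds $a_4\le n_q$. The resulting list of admissible $a_4$ is small, and each candidate is checked against Proposition~\ref{proprk4} to see which yield a regular quaternary truncation. Second, for the coordinates $a_5,a_6,\dots$, Lemmas~\ref{lemredodd} and \ref{lemred2} split each case according to $\mathrm{ord}_q(a_i)$; in each subcase one either shows $a_i$ is redundant (contradicting minimality) or produces a locally represented integer not represented by $L$ (contradicting regularity). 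This forces $k=4$ in Batches~1, 2, 3, while in Batches~$2'$ and $3'$ the $a_4$-analysis itself yields a contradiction, so no minimal regular $L$ exists for those ternary sections. The single exception is Batch~4 (containing $j=15$ and $j=40$), where a finer analysis modulo $5^{2s+1}$ permits a genuine rank-$5$ family; the argument detailed for $j=15$ in the excerpt shows $k\le 5$ and classifies the rank-$5$ instances.

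For the converse, one must verify that every entry of Tables~\ref{tablerk4} and~\ref{tablerk5} is indeed minimal regular. Regularity of the rank-$4$ entries is exactly Proposition~\ref{proprk4}; regularity of the rank-$5$ entries from Batch~4 is established by the explicit comparison of $Q(\gen(\cdot))$ with $Q(\cdot)$ carried out in the Batch~4 analysis. Minimality is checked coordinate by coordinate: for each $a_i$ appearing in a listed lattice, one exhibits an integer in $Q(L)$ not represented by the lattice obtained by deleting $a_i$, and such witnesses are already furnished by the local obstructions used in the forward direction.

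The principal obstacle is not conceptual but combinatorial: $103$ ternary sections, each requiring its own case split on the $p$-adic orders of $a_4$ (and sometimes $a_5$) for $p\in\{2,3,5\}$, together with genus-representation computations in the spirit of \cite{OM2}. The six worked-out representatives show that no ingredient beyond Lemmas~\ref{lemredodd}, \ref{lemred2}, \ref{lembound}, Proposition~\ref{proprk4}, and standard local computations is needed; the remaining difficulty is therefore one of systematic and complete enumeration, suitable for computer-assisted verification.
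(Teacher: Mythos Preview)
Your proposal is correct and follows essentially the same approach as the paper: reduce via Lemma~\ref{lemternary} to the $103$ ternary sections, group them into the six batches of Table~\ref{tablebatch}, and apply the template of local obstructions, Lemma~\ref{lembound}, Proposition~\ref{proprk4}, and the redundancy criteria (Lemmas~\ref{lemredodd} and~\ref{lemred2}) to bound $k$ and pin down the tail. The only minor addition over the paper's own summary is that you make the converse direction (regularity and minimality of the tabulated lattices) explicit, noting in particular that minimality of the rank-$4$ entries is automatic since every quaternary regular diagonal form is minimal.
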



\begin{table}[ht]
\caption{Minimal regular diagonal quinary $\z$-lattices $\langle a_1,a_2,a_3,a_4,a_5\rangle$ ($r\in \n$)}
\begin{tabular}{|ccc|l|}
\hline
$a_1$ & $a_2$ & $a_3$ & $(a_4,a_5)$\\
\hline
1 & 2 & 5 & $(5^{2r-1},5^{2r-2}s)$, $s=5,11,12,13,14,15$\\
\hline
1 & 5 & 10 & $(5^{2r},5^{2r-1}s)$, $s=5,11,12,13,14,15$\\
\hline
\end{tabular}
\label{tablerk5}
\end{table}


Among the minimal regular diagonal quinary $\z$-lattices, which are listed in Table \ref{tablerk5}, some are new and some are not.
To see this, we put
$$
K(1,r,s)=\langle 1,2,5,5^{2r-1},5^{2r-2}s\rangle \ \ \text{and}\ \ K(2,r,s)=\langle 1,5,10,5^{2r},5^{2r-1}s\rangle,
$$
for $r=1,2,\dots$, and $s\in \{5,11,12,13,14,15\}$.
One may check that the quaternary $\z$-lattice $\langle 1,2,5,5^{2r-1}+5^{2r-1}\rangle$ is regular and
$$
Q(\langle 1,2,5,5^{2r-1}+5^{2r-1}\rangle)=Q(K(1,r,5)).
$$
Hence the minimal regular diagonal quinary $\z$-lattice $K(1,r,5)$ is not new for any positive integer $r$.
One may easily show that $K(2,r,5)$ is not new in a similar manner.


\begin{prop}
Under the notation given above, both regular quinary $\z$-lattices $K(1,r,s)$ and $K(2,r,s)$ are new for any positive integer $r$ and $s\in \{11,12,13,14,15\}$.
\end{prop}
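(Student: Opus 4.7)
The plan is to verify newness by exhibiting, for each of the five rank-four subforms of $K(i,r,s)$ obtained by removing one coefficient, an integer represented by $K(i,r,s)$ but not by the subform. Since any proper subform is contained in a rank-four subform, this suffices.

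For $K(1,r,s)=\langle 1,2,5,5^{2r-1},5^{2r-2}s\rangle$, the drops of the three smallest coefficients $1$, $2$, and (when $r=1$) $5$ are handled by the trivial witnesses $1$, $2$, and $10$: once the smallest coefficient is removed, no combination of the surviving ones can equal the witness, the extremal obstacle being that the coefficient $5^{2r-2}s$ with $s\geq 11$ is too large to contribute to such small sums. For general $r$, the drops of $5$ and of $5^{2r-1}$ are both handled by the witness $n=2\cdot 5^{2r-1}=5\cdot (5^{r-1})^2+5^{2r-1}\cdot 1^2$; its exclusion from the subform follows from the classical fact that $\langle 1,2,5\rangle$ does not represent integers of the form $5^{2u+1}(5v\pm 2)$, combined with the observation that any positive contribution from the coefficient $5^{2r-2}s$ would force the remainder to be negative (since $s>10$). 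The substantive case is the drop of $5^{2r-2}s$, leaving $\langle 1,2,5,5^{2r-1}\rangle$; I take $n=3\cdot 5^{2r-1}$, which is precisely the unique element of $Q(\gen(\langle 1,2,5,5^{2r-1}\rangle))\setminus Q(\langle 1,2,5,5^{2r-1}\rangle)$ already identified in the Batch~4 discussion. To see $n\in Q(K(1,r,s))$, one writes $n-5^{2r-2}s=5^{2r-2}(15-s)$, a non-negative integer of even $5$-adic valuation, hence in $Q(\langle 1,2,5\rangle)$ by the regularity of this ternary.

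The argument for $K(2,r,s)=\langle 1,5,10,5^{2r},5^{2r-1}s\rangle$ is parallel. The three smallest drops use the witnesses $1$, $5$, and $10$. The drop of $5^{2r}$ uses $n=2\cdot 5^{2r}=(5^r)^2+5^{2r}\cdot 1^2$, which is excluded from $\langle 1,5,10,5^{2r-1}s\rangle$ by a $5$-adic descent argument (alternating between $\langle 1,2,5\rangle$ and $\langle 1,5,10\rangle$, each step using that $-2$ is not a square modulo $5$) that ultimately reduces the claim to the non-representation of $2$ by $\langle 1,5,10\rangle$. The drop of $5^{2r-1}s$ uses $n=3\cdot 5^{2r}$, the analogue of the Batch~4 exception for $\langle 1,5,10,5^{2r}\rangle$; for each $s\in\{11,12,13,14,15\}$, the decomposition $n-5^{2r-1}s=5^{2r-1}(15-s)$ is directly verified to lie in $Q(\langle 1,5,10,5^{2r}\rangle)$, placing $n$ in $Q(K(2,r,s))$.

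The principal obstacle is the finite but careful verification of the ten non-representation claims. Each reduces to bounding the variable attached to the $s$-coefficient (forced to be $0$ since any positive value produces a negative remainder when $s\geq 11>10$) and then invoking the representation sets of the regular ternaries $\langle 1,2,5\rangle$ and $\langle 1,5,10\rangle$. The global representability of each proposed witness by $K(i,r,s)$ is guaranteed by the regularity established in Batch~4, so the substantive work lies entirely on the subform side.
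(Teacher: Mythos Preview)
Your reduction in the first sentence is false, and the gap is fatal. A ``proper subform'' in the definition of \emph{new} means an arbitrary proper sublattice with the restricted quadratic form, not merely one of the five coordinate hyperplanes $\widehat{f}_i$. A proper sublattice can have full rank $5$, and such a sublattice is contained in no $\widehat{f}_i$. The paper itself supplies the relevant counterexample immediately before the proposition: for $s=5$ the lattice $K(1,r,5)=\langle 1,2,5,5^{2r-1},5^{2r-1}\rangle$ is minimal (so every $\widehat{f}_i$ misses some value, exactly as your method would detect), yet it is \emph{not} new, because the rank-four sublattice
\[
\z\mathbf{x}_1+\z\mathbf{x}_2+\z\mathbf{x}_3+\z(\mathbf{x}_4+\mathbf{x}_5)\ \simeq\ \langle 1,2,5,2\cdot 5^{2r-1}\rangle
\]
already satisfies $Q(\cdot)=Q(K(1,r,5))$. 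Your argument, applied verbatim to $K(1,r,5)$, would therefore ``prove'' a false statement. What you have actually verified is minimality of $K(i,r,s)$, which was already recorded in Table~\ref{tablerk5}.

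The paper's proof proceeds differently and handles all sublattices at once: fix a diagonal basis $\{\mathbf{x}_i\}$ and let $\ell\subseteq L$ be any sublattice with $Q(\ell)=Q(L)$. One then shows successively that $\mathbf{x}_1,\dots,\mathbf{x}_5\in\ell$ by enumerating, for suitably chosen values $n\in Q(L)$, the finite set $\{\mathbf{x}\in L:Q(\mathbf{x})=n\}$ and observing that every vector in it forces the next $\mathbf{x}_i$ into $\ell$. The point is that this argument controls sublattices of rank $5$ as well; your witnesses only separate $Q(L)$ from $Q(\widehat{f}_i)$ and say nothing about diagonal-mixing sublattices such as the one above.
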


\begin{proof}
Since the proofs are quite similar to each other, we only provide the proof for $K(1,1,11)$.
Put $L=K(1,1,11)$ and take a basis $\{ \mathbf{x}_i\}_{1\le i\le 5}$ of $L$ with respect to which
$$
L=\z \mathbf{x}_1+\z \mathbf{x}_2+\cdots+\z \mathbf{x}_5\simeq \langle 1,2,5,5,11\rangle.
$$
Let $\ell$ be a sublattice of $L$ such that $Q(\ell)=Q(L)$.
Since $1\in Q(\ell)$ and
$$
\{ \mathbf{x}\in L : Q(\mathbf{x})=1\}=\{ \pm \mathbf{x}_1\},
$$
we have $\mathbf{x}_1\in \ell$.
One may use
$$
\{ \mathbf{x}\in L : Q(\mathbf{x})=2\}=\{ \pm \mathbf{x}_2\}
$$
to see that $\mathbf{x}_2\in \ell$.
Since
$$
\{ \mathbf{x}\in L : Q(\mathbf{x})=5\}=\{ \pm \mathbf{x}_3,\pm \mathbf{x}_4\},
$$
we may assume that $\mathbf{x}_3\in \ell$ by interchanging $\mathbf{x}_3$ and $\mathbf{x}_4$, if necessary.
From 
$$
\{ \mathbf{x}\in L : Q(\mathbf{x})=2\cdot 5^{r-1}=10\}=\{ \pm 5^{r-1}\mathbf{x}_3\pm \mathbf{x}_4\}=\{ \pm \mathbf{x}_3\pm \mathbf{x}_4\},
$$
it follows that $\mathbf{x}_4\in \ell$.
If we let $\mathbf{x}$ be a vector in $\ell$ with $Q(\mathbf{x})=15$, then $\mathbf{x}=c_1\mathbf{x}_1+c_2\mathbf{x}_2+c_3\mathbf{x}_3+c_4\mathbf{x}_4\pm \mathbf{x}_5$ for some integers $c_1,c_2,c_3$ and $c_4$.
This clearly implies that $\mathbf{x}_5\in \ell$.
Hence we have $\mathbf{x}_i\in \ell$ for every $i=1,2,3,4,5$, which implies that $\ell=L$.
This proves that $L=K(1,1,11)$ is new.
\end{proof}


We conclude with the following example which describes the way how one may recover all regular diagonal $\z$-lattices of rank greater than 3 from minimal ones.


\begin{exam}
As in Example \ref{ex148144}, one may easily deduce that any regular diagonal $\z$-lattice which can be obtained from $K(r)=\langle 1,48,144,2^5 3^{2r}\rangle$ by redundant insertions is of the form
$$
\langle 1,48,144,2^5 3^{2r},2^4 3^{2r}b_5,2^4 3^{2r}b_6,\dots,2^4 3^{2r}b_k\rangle,\quad (b_5,b_6,\dots,b_k)\in \mathcal{N}(k-4).
$$
Note that if $b_5=1$, then this $\z$-lattice can also be obtained from
$$
L(r)=\langle 1,48,144,2^4 3^{2r}\rangle.
$$
Therefore any regular diagonal $\z$-lattice having $\langle 1,48,144\rangle$ as its ternary section is of the form
$$
\langle 1,48,144,2^4 3^{2r}c_4,2^4 3^{2r}c_5,2^4 3^{2r}c_6,\dots,2^4 3^{2r}c_k\rangle,
$$
where $(c_4,c_5,\dots,c_k)\in \mathcal{N}(k-3)$ with $c_4\in \{1,2\}$.
\end{exam}

Now one may explicitly write down all regular diagonal $\z$-lattices of rank $k\ge 4$.
Conversely, for a given diagonal $\z$-lattice $L$ of rank $k\ge 4$, one may effectively determine whether $L$ is regular or not.



\begin{thebibliography}{abcd}

\bibitem {B} M. Bhargava, {\em On the Conway-Schneeberger fifteen theorem}, Contemp. Math. \textbf{272}(2000), 27-38.

\bibitem{CE} W. K. Chan and A. G. Earnest, {\em Discriminant bounds for spinor regular ternary quadratic lattices}, J. London Math. Soc. (2) \textbf{69}(2004), no. 3, 545-561.

\bibitem {CO} W. K. Chan and B.-K. Oh, {\em Finiteness theorems for positive definite n-regular quadratic forms}, Trans. Amer. Math. Soc. \textbf{355}(2003), no. 6, 2385-2396.

\bibitem{CHPT} P. L. Clark, J. Hicks, H. Parshall and K. Thompson, {\em GoNI: Primes represented by binary quadratic forms}, Integers \textbf{13}(2013), A37, 18 pp.

\bibitem{D} L. E. Dickson, {\em Ternary quadratic forms and congruences}, Ann. of Math. \textbf{28}(1927), 331-341.

\bibitem {E2} A. G. Earnest, {\em An application of character sum inequalities to quadratic forms, Number Theory}, Canadian Math. Soc. Conference Proceedings \textbf{15}(1995), 155–158.

\bibitem {G} A. Gica, {\em Quadratic residues of certain types}, Rocky Mountain J. Math. \textbf{36}(2006), no. 6, 1867-1871.

\bibitem {JKS} W. C. Jagy, I. Kaplansky and A. Schiemann, {\em There are 913 regular ternary forms}, Mathematika \textbf{44}(1997), 332–341.

\bibitem {J} B. W. Jones, {\em Representation by positive ternary quadratic forms}, Ph.D. Thesis, University of Chicago, 1928.

\bibitem {JP} B. W. Jones and G. Pall, {\em Regular and semi-regular positive ternary quadratic forms}, Acta Math. \textbf{70}(1940), 165-190.

\bibitem{Ka} E. Kani, {\em Idoneal numbers and some generalizations}, Ann. Sci. Math. Qu\'ebec \textbf{35}(2011), no. 2, 197-227.

\bibitem{BMK} B. M. Kim, {\em Complete determination of regular positive diagonal quaternary integral quadratic forms}, preprint.

\bibitem{KO} M. Kim and B.-K. Oh, {\em The rank of new regular quadratic forms}, to appear in J. Number Theory.

\bibitem {Ki} Y. Kitaoka, {\em Arithmetic of quadratic forms}, Cambridge University Press, 1993.

\bibitem {L} R. J. Lemke Oliver, {\em Representation by ternary quadratic forms}, Bull. Lond. Math. Soc. \textbf{46}(2014), 1237-1247.

\bibitem {O1} B.-K. Oh, {\em Regular positive ternary quadratic forms}. Acta. Arith. \textbf{147}(2011),  233–243.

\bibitem{OM} O. T. O'Meara, {\em Introduction to quadratic forms}, Springer-Verlag, New York, 1963.

\bibitem {OM2} O. T. O'Meara, {\em The integral representations of quadratic forms over local fields}, Amer. J. Math. \textbf{80}(1958), 843-878.

\bibitem {P} P. Pollack, {\em The least prime quadratic nonresidue in a prescribed residue class mod 4}, J. Number Theory \textbf{187}(2018), 403-414.

\bibitem {R} S. Ramanujan, {\em On the expression of a number in the form $ax^2+by^2+cz^2+du^2$}, Proc. Camb. Phil. Soc. \textbf{19}(1917), 11-21.

\bibitem{V} J. Voight, {\em Quadratic forms that represent almost the same primes}, Math. Comp. \textbf{76}(2007), 1589-1617.

\bibitem {W} G. L. Watson, {\em Some problems in the theory of numbers}, Ph.D. Thesis, University of London, 1953.

\bibitem {W2} G. L. Watson, {\em Transformations of a quadratic form which do not increase the class-number}, Proc. London Math. Soc. (3) \textbf{12}(1962), 577-587.

\end{thebibliography}
\end{document}